\newtheorem{theorem}{Theorem}[section]
\newtheorem{lemma}[theorem]{Lemma}
\newtheorem{proposition}[theorem]{Proposition}
\newtheorem{corollary}[theorem]{Corollary}
\theoremstyle{definition}
\newtheorem{definition}[theorem]{Definition}
\theoremstyle{remark}
\newtheorem{remark}[theorem]{Remark}
\numberwithin{equation}{section}
\def\Im{\operatorname{Im}}
\def\Ker{\operatorname{Ker}}
\def\GL{\operatorname{GL}}
\def\rank{\operatorname{rank}}
\def\Sp{\operatorname{Sp}}
\def\urSp{\operatorname{urSp}}
\def\Sign{\operatorname{Sign}}
\def\Hom{\operatorname{Hom}}
\def\id{\operatorname{id}}
\def\Z{{\mathbb Z}}
\def\Q{{\mathbb Q}}
\def\R{{\mathbb R}}
\def\HH{\mathscr{H}}
\def\mcg{\operatorname{Mod}(\Sigma_g)}
\def\hyp{\mathcal{H}(\Sigma_g)}
\def\hb{\operatorname{Mod}(V_g)}
\def\hhb{\mathcal{H}(V_g)}
\begin{document}
\title[]{The Meyer function on the handlebody group}

\author{Yusuke Kuno}
\address{
Department of Mathematics,
Tsuda University,
2-1-1 Tsuda-machi, Kodaira-shi, Tokyo 187-8577, Japan}
\email{kunotti@tsuda.ac.jp}

\author{Masatoshi Sato}
\address{
Department of Mathematics,
School of Science \& Technology for Future Life, 
Tokyo Denki University,
5 Senjyuasahi-cho, Adachi-ku, Tokyo 120-8551, Japan}
\email{msato@mail.dendai.ac.jp}

\subjclass[2000]{20F38, 55R10, 57N13, 57R20}
\date{\today}
\keywords{}

\begin{abstract}
We give an explicit formula for the signature of handlebody bundles over the circle in terms of the homological monodromy.
This gives a cobounding function of Meyer's signature cocycle on the mapping class group of a $3$-dimensional handlebody, i.e., the handlebody group.
As an application, we give a topological interpretation for the generator of the first cohomology group of the hyperelliptic handlebody group.
\keywords{Signature cocycle, Handlebody group, Mapping class groups}
\end{abstract}

\maketitle

\section{Introduction} \label{sec:intro}

Let $\Sigma_g$ be a closed connected oriented surface of genus $g$
and $\mcg$ the mapping class group of $\Sigma_g$,
namely the group of isotopy classes of orientation-preserving self-diffeomorphisms of $\Sigma_g$.
Unless otherwise stated, we assume that (co)homology groups have coefficients in $\Z$.
The second cohomology of $\mcg$ has been determined for all $g\ge 1$ by works of many people, in particular by the seminal work of Harer~\cite{Harer,Harer2} for $g\ge 3$.
We have $H^2(\operatorname{Mod}(\Sigma_1)) \cong \Z/ 12\Z$,
$H^2(\operatorname{Mod}(\Sigma_2)) \cong \Z/10 \Z$, and
\[
H^2(\mcg) \cong \Z \quad \text{for $g\ge 3$.}
\]
There are various interesting constructions of non-trivial second cohomology class of $\mcg$; the reader is referred to the survey article \cite{Kawazumi_survey}.
Among others, the remarkable approach of Meyer~\cite{Meyer2,Meyer} 
was to consider the signature of $\Sigma_g$-bundles over surfaces.
The central object that Meyer used was a normalized 2-cocycle
\[
\tau_g\colon \Sp(2g;\Z) \times \Sp(2g;\Z) \to \Z
\]
on the integral symplectic group of degree $2g$.

Meyer showed that for $g\ge 3$ the pullback of the cohomology class of $\tau_g$ by the homology representation $\rho\colon \mcg \to \Sp(2g;\Z)$ is of infinite order in $H^2(\mcg)$.
On the other hand, if $g = 1,2$ then $[\rho^*\tau_g]$ is torsion and
there exists a (unique) rational valued cobounding function $\phi_g\colon \mcg \to \Q$ of $\rho^*\tau_g$.
This means that
\[
\tau_g(\rho(\varphi_1),\rho(\varphi_2)) = \phi_g(\varphi_1) + \phi_g(\varphi_2) - \phi_g(\varphi_1\varphi_2)
\quad \text{for any $\varphi_1,\varphi_2 \in \mcg$.}
\]
Since the case $g=1$ was extensively studied by Meyer, such a cobounding function is called a Meyer function.
Some number-theoretic and differential geometric aspects of the function $\phi_1$ were studied by Atiyah \cite{Atiyah}.
The case $g = 2$ was studied by Matsumoto \cite{Matsumoto}, Morifuji \cite{Morifuji03} and Iida \cite{Iida}.
For $g\ge 3$, there is no cobounding function of $\rho^*\tau_g$ on the whole mapping class group $\mcg$.
However, if we restrict $\rho^*\tau_g$ to a subgroup called the hyperelliptic mapping class group $\mathcal{H}(\Sigma_g)$, then it is known that there is a (unique) cobounding function $\phi^{\mathcal{H}}_g \colon \mathcal{H}(\Sigma_g) \to \Q$ of $\rho^*\tau_g$.
Note that $\mathcal{H}(\Sigma_g) = \mcg$ for $g = 1,2$.
This function $\phi^{\mathcal{H}}_g$ was studied by Endo \cite{Endo00} and Morifuji \cite{Morifuji03}.
One motivation for studying Meyer functions comes from the localization phenomenon of the signature of fibered $4$-manifolds.
See, e.g., \cite{AshiEn, Kuno}.

In this paper, we study a new example of Meyer functions:
the Meyer function on the handlebody group. 
The handlebody group of genus $g$,
which we denote by $\hb$, is defined as 
the group of isotopy classes of orientation-preserving self-diffeomorphisms of the 3-dimensional handlebody $V_g$ of genus $g$.
It is well known that the natural homomorphism $\hb \to \mcg, \varphi \mapsto \varphi|_{\Sigma_g}$ is injective since $V_g$ is an irreducible 3-manifold.
Therefore, we can think of $\hb$ as a subgroup of $\mcg$.
For a mapping class $\varphi\in \hb$,
we denote by $M_{\varphi}$ the mapping torus of $\varphi$. It is a compact oriented $4$-manifold. 
We define
\[
\phi_g^V(\varphi):=\Sign M_{\varphi}\in\Z.
\]
We show in Lemma~\ref{lem:cobounding} that $\phi_g^V$ is a cobounding function of the cocycle $\rho^*\tau_g$
on the handlebody group $\hb$.
If $g\ge3$,  this is the unique cobounding function since $H_1(\hb)$ is torsion (see \cite[Theorem~20]{Wajnryb} and \cite[Remark~3.5]{IshidaSato}).

The value $\phi_g^V(\varphi)$ can be computed from the action of $\varphi$ on the first homology $H_1(\Sigma_g)$, 
and our first result gives its explicit description.
To state it, we take a suitable basis of $H_1(\Sigma_g)$ so that the homology representation $\rho$ restricted to $\hb$ takes values in a subgroup $\urSp(2g;\Z) \subset \Sp(2g;\Z)$.
(See Section \ref{subsec:hbg} for details.)
Then, $\rho(\varphi)$ is of the form
$\rho(\varphi) = \begin{pmatrix} P & Q \\ O_g & S \end{pmatrix}$, where $P$, $Q$ and $S$ are $g\times g$ matrices.
We consider a $\Q$-linear space $U_{\varphi}:=\Ker(S-I_g) \subset \Q^g$, and define a bilinear form $\langle \ ,\ \rangle_{\varphi}$ on it by
\[
\langle x, y \rangle_{\varphi} := {}^t x\,  {}^t Q\, y, \quad \text{for $x,y \in U_{\varphi}$}.
\]
It turns out that $\langle \ ,\ \rangle_{\varphi}$ is symmetric, and we have the following:

\begin{theorem}
\label{thm:main1}
The value $\phi_g^V(\varphi)$ coincides with the signature of the symmetric bilinear form $\langle \ , \ \rangle_{\varphi}$ on $U_{\varphi}$.
\end{theorem}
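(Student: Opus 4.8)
The plan is to compute $\Sign M_\varphi$ as the signature of the intersection form on the middle homology of the compact oriented $4$-manifold $M_\varphi$, after identifying that homology by means of the Wang exact sequence of the fibration $M_\varphi\to S^1$ with fiber $V_g$. Recall that $\partial M_\varphi$ is the $\Sigma_g$-bundle over $S^1$ with monodromy $\varphi|_{\Sigma_g}$, and that $\Sign M_\varphi$ is by definition the signature of the symmetric bilinear form on $H_2(M_\varphi;\R)$ induced from $H_2(M_\varphi;\R)\to H_2(M_\varphi,\partial M_\varphi;\R)\cong H^2(M_\varphi;\R)$; the radical of this form is the kernel of the first map, so the signature is the same whether computed on $H_2(M_\varphi;\R)$ or on the image of $H_2(M_\varphi,\partial M_\varphi;\R)$. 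Since $V_g$ is homotopy equivalent to a wedge of $g$ circles, $H_2(V_g;\Q)=0$ and $H_1(V_g;\Q)\cong\Q^g$, and the Wang sequence gives
\[
H_2(M_\varphi;\Q)\ \cong\ \Ker\bigl(\varphi_*-\id\colon H_1(V_g;\Q)\to H_1(V_g;\Q)\bigr).
\]
With the basis of Section~\ref{subsec:hbg}, the kernel of $H_1(\Sigma_g)\to H_1(V_g)$ is spanned by $a_1,\dots,a_g$ (which bound meridian disks $\Delta_1,\dots,\Delta_g$ in $V_g$), while $b_1,\dots,b_g$ restrict to a basis of $H_1(V_g)$ and $a_i\cdot b_j=\delta_{ij}$; the induced action of $\varphi_*$ on $H_1(V_g)=H_1(\Sigma_g)/\langle a_1,\dots,a_g\rangle$ is the block $S$. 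Hence $H_2(M_\varphi;\Q)\cong\Ker(S-I_g)=U_\varphi$, and it remains to identify the intersection form with $\langle\ ,\ \rangle_\varphi$ under this isomorphism.

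The next step is to represent homology classes geometrically. Given $x\in U_\varphi$, pick a $1$-cycle $c_x$ on $\Sigma_g=\partial V_g$ with $[c_x]=x$ in $H_1(V_g)\cong\Q^g$. Because $Sx=x$, the $1$-cycles $\varphi(c_x)$ and $c_x$ are homologous in $V_g$, so one may choose a $2$-chain $\Gamma_x$ in $V_g$ with $\partial\Gamma_x=\varphi(c_x)-c_x$. Writing $M_\varphi=V_g\times[0,1]/(v,1)\sim(\varphi(v),0)$, the $2$-chain $Z_x:=(c_x\times[0,1])-(\Gamma_x\times\{0\})$ (with orientations chosen so that this is a cycle) represents the class corresponding to $x$ under the Wang isomorphism. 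To compute $Z_x\cdot Z_y$, perturb $Z_y$ by a small shift in the $S^1$-direction together with a generic perturbation inside the fibers: the cylinder--cylinder term vanishes for dimension reasons ($1+1<3=\dim V_g$), the cap--cap term vanishes since the caps then lie in distinct fibers, and the only surviving contribution is a cap--cylinder term equal, up to a fixed sign, to the intersection number $(\Gamma_x\cdot c_y)_{V_g}$ inside the $3$-manifold $V_g$, which is well defined once $c_y$ is pushed slightly into the interior so as to miss $\partial\Gamma_x\subset\Sigma_g$. (This is the analogue, for fibered odd-dimensional fibers, of the variation/Seifert-form description of the intersection form of a mapping torus.)

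Finally I would evaluate $(\Gamma_x\cdot c_y)_{V_g}$. Since $c_y$ now lies in the interior, this number depends only on the class of $\Gamma_x$ in $H_2(V_g,\partial V_g;\Q)$, which by the exact sequence of the pair maps isomorphically (via $\partial$) onto $\langle a_1,\dots,a_g\rangle\subset H_1(\Sigma_g;\Q)$; under this isomorphism $[\Gamma_x]$ corresponds to $[\partial\Gamma_x]=\sum_i(Qx)_i\,a_i$, because the $b$-components are $((S-I_g)x)_i=0$, i.e.\ $[\Gamma_x]=\sum_i(Qx)_i[\Delta_i]$. Pairing with $c_y=\sum_j y_j\,b_j$ and using that the intersection pairing $H_2(V_g,\partial V_g;\Q)\times H_1(V_g;\Q)\to\Q$ restricts to $[\Delta_i]\cdot b_j=a_i\cdot b_j=\delta_{ij}$, one obtains $(\Gamma_x\cdot c_y)_{V_g}=\sum_i(Qx)_i\,y_i={}^ty\,Q\,x={}^tx\,{}^tQ\,y=\langle x,y\rangle_\varphi$. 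With the orientation conventions fixed in Section~\ref{subsec:hbg} the overall sign is $+1$, so $\Sign M_\varphi=\Sign\langle\ ,\ \rangle_\varphi$, as claimed. The main obstacle is precisely the careful bookkeeping in the last two paragraphs: arranging transversality of $Z_x$ and $Z_y$ near $\partial M_\varphi$, checking that all but one term in the intersection count vanish and with the correct sign, and controlling the auxiliary $2$-chains; as a consistency check, the symmetry of $\langle\ ,\ \rangle_\varphi$ follows algebraically from ${}^tQ\,S={}^tS\,Q$ (a consequence of $\rho(\varphi)\in\Sp(2g;\Z)$) restricted to $\Ker(S-I_g)$, which matches the fact that an intersection form is symmetric.
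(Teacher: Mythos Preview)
Your argument is correct and follows essentially the same route as the paper: you identify $H_2(M_\varphi)$ with $\Ker(S-I_g)$ via the (Wang/degenerate Serre) spectral sequence, build the same explicit ``cylinder plus cap'' cycles $Z_x$ that the paper calls $\mathcal{Z}_\alpha$, and then read off the intersection form through the pairing $H_2(V_g,\partial V_g)\times H_1(V_g)\to\Z$ applied to $[\Gamma_x]=\sum_i (Qx)_i[\Delta_i]$ against $c_y$. The only stylistic difference is that the paper packages the intersection computation via Proposition~\ref{prop:incl} (identifying $i_*\colon H_2(M_\varphi)\to H_2(M_\varphi,\partial M_\varphi)$ with the map $d$) together with the compatibility of intersection products under the spectral sequence isomorphisms, whereas you perturb and count directly; in your direct count, note that the second cap--cylinder term ($\Gamma_y$ against $c_x$) vanishes because $c_x\subset\Sigma_g$ while $c_y$ and $\Gamma_y$ have been pushed into the interior---this is implicit in your write-up but worth making explicit.
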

In fact, we will show in Section \ref{subsec:intform} that the intersection form on $H_2(M_{\varphi})$ is equivalent to the bilinear form $\langle \ ,\ \rangle_{\varphi}$.

As a corollary, we see that the function $\phi_g^V$ is bounded by $g=\rank H_1(V_g)$.
We also give sample calculations of $\phi_g^{V}$ in Lemmas~\ref{lem:phiVt} and \ref{lem:phiVs}.
Walker also constructed a function $j\colon \mcg\to\Q$ whose restriction to $\hb$ coincides with $\phi_g^{V}$.
Our description of $\phi_g^V$ in Theorem~\ref{thm:main1} is similar to but different from a description of $j$ given by Gilmer and Masbaum~\cite[Proposition~6.9]{GilMas}. 
See, for details, Remark~\ref{rem:GilMas}.

As an application of the function $\phi_g^V$,
we obtain a non-trivial first cohomology class in the intersection $\hyp\cap\hb$ called the hyperelliptic handlebody group, denoted by $\hhb$.
The group $\hhb$ is an extension by $\mathbb{Z}/ 2\mathbb{Z}$ of a subgroup of 
the mapping class group of a 2-sphere with $(2g + 2)$-punctures, called the Hilden group.
The Hilden group was introduced in~\cite{Hilden},
and it is related to the study of links in $3$-manifolds.
In \cite{HiroseKin}, Hirose and Kin studied the minimal dilatation of pseudo-Anosov elements in $\hhb$, and gave a presentation of $\hhb$.

We consider the difference
\[
\phi_g^{\mathcal{H}}-\phi_g^V\in \Hom(\hhb,\mathbb{Q})=H^1(\hhb;\Q)
\]
of the Meyer functions on $\hyp$ and on $\hb$.
From the abelianization of $\hhb$ obtained in \cite[Corollary~A.9]{HiroseKin},
we see that the rank of $H^1(\hhb)$ is one.
Let us denote a generator of $H^1(\hhb)$ by $\mu$.
Our second result is:
\begin{theorem}
\label{thm:main}
Let $g\ge 1$. We have
\[
\phi_g^{\mathcal{H}}-\phi_g^V
=\begin{cases}
\displaystyle\frac{2}{2g + 1}\mu&\text{if }g\text{ is even},\\[10pt]
\displaystyle\frac{1}{2g + 1}\mu&\text{if }g\text{ is odd}.
\end{cases}
\]
\end{theorem}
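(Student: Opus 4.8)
The plan is to exploit that the difference $\phi_g^{\mathcal{H}}-\phi_g^V$ is a genuine group homomorphism, so that it is pinned down by its value on a single element, and then to evaluate that value by two independent computations. Since $\phi_g^V$ cobounds $\rho^*\tau_g$ on $\hb$ (Lemma~\ref{lem:cobounding}) and $\phi_g^{\mathcal{H}}$ cobounds $\rho^*\tau_g$ on $\hyp\supset\hhb$, the restrictions of both to $\hhb$ cobound the same $2$-cocycle there; hence their difference $\phi_g^{\mathcal{H}}-\phi_g^V\colon\hhb\to\Q$ is a homomorphism, and therefore lies in $\Hom(\hhb,\Q)=H^1(\hhb;\Q)$. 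By the abelianization computation of \cite[Corollary~A.9]{HiroseKin} this space is one-dimensional, spanned by $\mu$, so $\phi_g^{\mathcal{H}}-\phi_g^V=c_g\mu$ for some $c_g\in\Q$, and it remains to determine $c_g$. For that I fix one mapping class $\xi\in\hhb$ of infinite order whose image in $H_1(\hhb)$ is non-torsion, so that $c_g=(\phi_g^{\mathcal{H}}(\xi)-\phi_g^V(\xi))/\mu(\xi)$, with $\mu(\xi)$ read off from the abelianization map.

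To choose $\xi$, I use the presentation of $\hhb$ in \cite{HiroseKin} together with its abelianization \cite[Corollary~A.9]{HiroseKin}: I select an explicit $\xi$ --- a suitable product of symmetric Dehn twists (equivalently, of the half-twist generators of the underlying Hilden group together with the central hyperelliptic involution) --- whose class generates $H_1(\hhb)/(\text{torsion})\cong\Z$, and I record $\mu(\xi)$. It is best to take $\xi$ that is simultaneously a word in the chain of Dehn twists $t_{\zeta_1},\dots,t_{\zeta_{2g+1}}$ generating $\hyp$ (so that Endo's formula applies) and of simple enough homological type in $\urSp(2g;\Z)$ that Theorem~\ref{thm:main1} can be applied directly. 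Since the final answer depends on the parity of $g$, the natural generator, or at least the ensuing bookkeeping, will differ for $g$ even and $g$ odd, and the two cases should be carried through in parallel.

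The two evaluations then run as follows. For $\phi_g^V(\xi)$, I write $\rho(\xi)=\begin{pmatrix}P&Q\\O_g&S\end{pmatrix}$, determine $U_\xi=\Ker(S-I_g)$ and the symmetric form $\langle x,y\rangle_\xi={}^tx\,{}^tQ\,y$ on it, and compute its signature by Theorem~\ref{thm:main1}; the contributions of the basic twists making up $\xi$ are exactly what Lemmas~\ref{lem:phiVt} and \ref{lem:phiVs} supply, or one reads the answer off the small matrix $\rho(\xi)$ directly. For $\phi_g^{\mathcal{H}}(\xi)$, I use Endo's explicit description \cite{Endo00} of the Meyer function on $\hyp$, which gives its value on each standard chain twist $t_{\zeta_i}$ (namely $-\tfrac{g+1}{2g+1}$) and hence, via the cocycle identity $\tau_g(\rho(\varphi_1),\rho(\varphi_2))=\phi(\varphi_1)+\phi(\varphi_2)-\phi(\varphi_1\varphi_2)$, on the word representing $\xi$. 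Subtracting and dividing by $\mu(\xi)$ should produce $c_g=\tfrac{2}{2g+1}$ for $g$ even and $c_g=\tfrac{1}{2g+1}$ for $g$ odd.

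The main obstacle is normalization and bookkeeping rather than any single hard computation: I must locate a generator of $H_1(\hhb)/(\text{torsion})$ inside the presentation of \cite{HiroseKin}, express it at once as a chain-twist word (for Endo's input) and identify its image in $\urSp(2g;\Z)$ (for Theorem~\ref{thm:main1}), and make sure the comparison is against the generator $\mu$ of $H^1(\hhb)$ itself and not against a class pulled back from $\hyp$. The factor of $2$ separating the even and odd cases is the genuinely delicate point and must emerge from the way the chosen generator depends on $g\bmod 2$. If no single convenient infinite-order generator is available, a workable substitute is to evaluate $\phi_g^{\mathcal{H}}-\phi_g^V$ on several explicit infinite-order elements and solve the resulting linear constraints for $c_g$.
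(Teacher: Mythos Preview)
Your plan is essentially the paper's proof: observe that $\phi_g^{\mathcal H}-\phi_g^V$ is a homomorphism on $\hhb$, hence a rational multiple of $\mu$, and determine that multiple by evaluating on a free-abelianization generator (the paper uses $s_1$ when $g$ is even and $t_1 s_1^{(g+1)/2}$ when $g$ is odd, via Lemmas~\ref{lem:phiVt}, \ref{lem:phiVs} and Endo's values). One small caution: in the conventions of this paper (right-handed twists), $\phi_g^{\mathcal H}(t_i)=+\tfrac{g+1}{2g+1}$, not $-\tfrac{g+1}{2g+1}$, so correct that sign before carrying out the arithmetic.
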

When $g = 1,2$, we have $\hhb=\hb$,
and $\phi_g^{\mathcal{H}}-\phi_g^V$
gives an abelian quotient of $\hb$.

There is an interpretation of the cohomology class 
$\phi_g^{\mathcal{H}}-\phi_g^V$
in terms of a kind of connecting homomorphism.
We assume that $g\ge3$.
From the diagram
\[
\begin{CD}
\hhb@>i_2>>\hb\\
@Vi_1VV@VVj_2V\\
\hyp@>>j_1>\mcg.
\end{CD}
\]
of groups and their inclusions,
we have a natural homomorphism
\[
\Upsilon\colon H^2(\mcg;\mathbb{Q})\to H^1(\hhb;\mathbb{Q})
\]
defined as follows.
For $[c]\in H^2(\mcg;\mathbb{Q})$,
there are cobounding functions $f^{\mathcal{H}}\colon\hyp\to\mathbb{Q}$ of $j_1^*c$ and $f^V\colon\hb\to\mathbb{Q}$ of $j_2^*c$, respectively.
The cochain $i_1^*f^{\mathcal{H}}-i_2^*f^V$ is actually a homomorphism on  $\hhb$.
It does not depend on the choices of the representatives $c$, $f^{\mathcal{H}}$, and $f^V$
since $H^1(\hb;\mathbb{Q})=H^1(\hyp;\mathbb{Q})=0$ when $g\ge3$.
Then $\Upsilon([c])$ is defined to be $i_1^*f^{\mathcal{H}}-i_2^*f^V$.
In this setting, our cohomology class is written as $\Upsilon([\tau_g])=\phi_g^{\mathcal{H}}-\phi_g^V\in H^1(\hhb;\Q)$.

The outline of this paper is as follows.
In Section~\ref{sec:pre},
we review the definition of Meyer's signature cocycle and the handlebody group $\hb$.
We also review the abelianization of the hyperelliptic handlebody group obtained in \cite{HiroseKin},
and describe a generator of the cohomology group $H^1(\hhb)$ in Corollary~\ref{cor:mu}.
In Section~\ref{sec:hbb}, we investigate the intersection form
of the mapping torus of $\varphi \in \hb$,
and prove Theorem~\ref{thm:main1}.
As it turns out, we can explicitly describe $\phi^V_g$ as a function on a subgroup $\urSp(2g;\mathbb{Z})$ of the integral symplectic group.
In Section~\ref{section:Meyer function},
we prove Theorem~\ref{thm:main} by using explicit calculations of the Meyer function $\phi_g^{V}\colon\hb\to\mathbb{Z}$ in Lemmas~\ref{lem:phiVt} and \ref{lem:phiVs}.

\section{Preliminaries on mapping class groups} \label{sec:pre}

Fix a non-negative integer $g$.

\subsection{Mapping class group of a surface} \label{subsec:mcg}

Let $\Sigma_g$ be a closed connected oriented surface of genus $g$.
The \emph{mapping class group} of $\Sigma_g$, denoted by $\mcg$, is the group of isotopy classes of orientation-preserving self-diffeomorphisms of $\Sigma_g$.
To simplify notation, we will use the same letter for a self-diffeomorphism of $\Sigma_g$ and its isotopy class.

The first homology group $H_1(\Sigma_g)$ is equipped with a non-degenerate skew-symmetric pairing $\langle  \cdot, \cdot \rangle$, namely the intersection form.
Thus we can take a symplectic basis $\alpha_1,\ldots,\alpha_g,\beta_1,\ldots,\beta_g$ for $H_1(\Sigma_g)$.
This means that $\langle \alpha_i, \beta_j \rangle = \delta_{ij}$ and $\langle \alpha_i,\alpha_j \rangle = \langle \beta_i,\beta_j \rangle = 0 $ for any $i,j\in \{1,\ldots,g\}$, where $\delta_{ij}$ is the Kronecker symbol.

Once a symplectic basis for $H_1(\Sigma_g)$ is fixed,
we obtain the homology representation
\[
\rho\colon \mcg \to \Sp(2g;\Z), \quad \varphi \mapsto \varphi_*.
\]
Here, the target is the integral \emph{symplectic group}
\[
\Sp(2g;\Z) = \{ A \in \GL(2g;\Z) \mid {}^t\!A J A = J \},
\]
where $J = \begin{pmatrix} O_g & I_g \\ -I_g & O_g \end{pmatrix}$,
and $\rho(\varphi)=\varphi_*$ is the matrix presentation of the action of $\varphi$ on $H_1(\Sigma_g)$ with respect to the fixed symplectic  basis.
We use block matrices to denote elements in $\Sp(2g;\Z)$, e.g., $A= \begin{pmatrix} P & Q \\ R & S \end{pmatrix}$ with $g\times g$ integral matrices $P$, $Q$, $R$, and $S$.

\subsection{Meyer's signature cocycle} \label{subse:taug}

Let $A,B\in \Sp(2g;\Z)$.
We consider an $\R$-linear space
\[
V_{A,B}:= \{ (x,y) \in \R^{2g} \oplus \R^{2g} \mid
(A^{-1} - I_{2g}) x + (B-I_{2g}) y = 0\}
\]
and a bilinear form on $V_{A,B}$ given by
\[
\langle (x,y), (x',y') \rangle_{A,B}:=
{}^t (x + y)J(I_{2g} - B)y'.
\]

The form $\langle \cdot, \cdot \rangle_{A,B}$ turns out to be symmetric, and thus its signature is defined; we set
\[
\tau_g(A,B):= \Sign(V_{A,B}, \langle \cdot, \cdot \rangle_{A,B}).
\]
The map $\tau_g\colon \Sp(2g;\Z) \times \Sp(2g;\Z) \to \Z$ is called \emph{Meyer's signature cocycle} \cite{Meyer2,Meyer}. 
It is a normalized 2-cocycle of the group $\Sp(2g;\Z)$.

Let $P$ be a compact oriented surface of genus $0$ with three boundary components, i.e., a pair of pants.
We denote by $C_1$, $C_2$ and $C_3$ the boundary components of $P$.
Choose a base point in $P$,
and let $\ell_1$, $\ell_2$ and $\ell_3$ be based loops in $P$ such that $\ell_i$ is parallel to the negatively oriented boundary component $C_i$ for any $i \in \{1,2,3\}$
and $\ell_1\ell_2\ell_3 = 1$ holds in the fundamental group $\pi_1(P)$.

For given two mapping classes $\varphi_1, \varphi_2 \in \mcg$, there is an oriented $\Sigma_g$-bundle $E(\varphi_1,\varphi_2) \to P$ such that the monodromy along $\ell_i$ is $\varphi_i$ for $i = 1,2$.
It is unique up to bundle isomorphisms. 
The total space $E(\varphi_1,\varphi_2)$ is a compact 4-manifold equipped with a natural orientation,
and hence its signature is defined.

\begin{proposition}[Meyer \cite{Meyer2,Meyer}]
\label{prop:Meyer}
$\Sign(E(\varphi_1,\varphi_2)) = \tau_g(\rho(\varphi_1),\rho(\varphi_2))$.
\end{proposition}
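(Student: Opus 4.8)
The plan is to compute $\Sign\big(E(\varphi_1,\varphi_2)\big)$ directly from the intersection pairing on the second homology of the total space, using the bundle structure $p\colon E=E(\varphi_1,\varphi_2)\to P$. By definition $\Sign(E)$ is the signature of the (in general degenerate) symmetric pairing on $H_2(E;\R)$ coming from the cup product, $E$ carrying its canonical orientation; recall that $\partial E$ is a disjoint union of three $\Sigma_g$-bundles over $S^1$. Since $H_i(P;-)=0$ for $i\ge 2$, the Leray--Serre spectral sequence of $p$ degenerates and in degree two yields a short exact sequence
\[
0\to H_2(\Sigma_g;\R)\to H_2(E;\R)\to H_1(P;\mathcal H)\to 0,
\]
where $\mathcal H$ is the local system on $P$ with stalk $H_1(\Sigma_g;\R)$ and monodromy along $\ell_i$ induced by the action of $\varphi_i$; the subspace $H_2(\Sigma_g;\R)=\R[\Sigma_g]$ is spanned by the class of a fibre.

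The first key point is that $[\Sigma_g]$ lies in the radical of the intersection pairing: $p_*$ maps $H_2(E;\R)$ into $H_2(P;\R)=0$, so $[\Sigma_g]\cdot x=0$ for every $x\in H_2(E;\R)$. Hence the pairing descends to $H_2(E;\R)/\R[\Sigma_g]\cong H_1(P;\mathcal H)$ without changing its signature. The second key point is to identify this quotient, together with its induced pairing, with the pair $\big(V_{A,B},\langle\,\cdot\,,\,\cdot\,\rangle_{A,B}\big)$ of Section~\ref{subse:taug}, where $A=\rho(\varphi_1)$ and $B=\rho(\varphi_2)$. For the underlying vector spaces this is Fox calculus: $\pi_1(P)$ is free on $\ell_1,\ell_2$, so $H_1(P;\mathcal H)$ is the kernel of a map $\R^{2g}\oplus\R^{2g}\to\R^{2g}$ built from $\rho(\varphi_1)^{\pm1}-I_{2g}$ and $\rho(\varphi_2)^{\pm1}-I_{2g}$, which after the change of basis dictated by the conventions ($\ell_i$ parallel to the negatively oriented $C_i$, together with $\ell_1\ell_2\ell_3=1$) becomes $V_{A,B}$. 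For the pairing I would represent a class of this space geometrically — drag a $1$-cycle on a fibre around a based loop in $P$ and cap it off over a contractible piece of $P$ — and compute the intersection number of two such surfaces by a Mayer--Vietoris/transversality count, obtaining the expression ${}^t(x+y)J(I_{2g}-B)y'$. Granting this, $\Sign(E)=\Sign\big(V_{A,B},\langle\,\cdot\,,\,\cdot\,\rangle_{A,B}\big)=\tau_g(\rho(\varphi_1),\rho(\varphi_2))$; in particular this shows $\Sign(E)$ depends only on $\rho(\varphi_1)$ and $\rho(\varphi_2)$.

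The topology in this argument is soft; the real difficulty is entirely bookkeeping of signs and conventions: fixing the orientation of $E$ induced by those of $P$ and $\Sigma_g$, keeping track of which loop $\ell_i$ carries which monodromy and with which orientation (this is what produces $A^{-1}$ rather than $A$, as well as the asymmetry between the two arguments of $\tau_g$), and — most delicately — checking that the Poincar\'e--Lefschetz-dual description of the cup product on $E$ matches the explicit bilinear formula defining $\tau_g$ on the nose, not merely up to isomorphism of forms. A conceptually cleaner alternative that sidesteps the explicit intersection computation is to cut $P$ along a properly embedded arc, so that $E$ is recovered by self-gluing the (signature-zero) bundle over the resulting annulus, and then apply Wall's non-additivity theorem: $\Sign(E)$ becomes a Maslov-type triple index of three Lagrangian subspaces of $H_1(\Sigma_g;\R)$ determined by $\rho(\varphi_1)$ and $\rho(\varphi_2)$. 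One is then left with the purely linear-algebraic task of showing that this triple index coincides with $\tau_g$ as defined in Section~\ref{subse:taug} — again, a matter of reconciling conventions.
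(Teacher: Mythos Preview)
The paper does not give a proof of this proposition: it is stated as Meyer's theorem with a citation to \cite{Meyer2,Meyer} and is used thereafter as a black box, so there is no argument in the paper to compare your sketch against.

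For what it is worth, your outline is the standard route and is essentially Meyer's own. The Leray--Serre short exact sequence is correct since $P$ is a $1$-complex up to homotopy; the fibre class is in the radical (your $p_*$ argument is fine once read through Poincar\'e--Lefschetz duality, and the more pedestrian reason is that a generic fibre is homologous to a fibre over $\partial P$, hence lies in the image of $H_2(\partial E;\R)$); and the identification of $H_1(P;\mathcal H)$ with $V_{A,B}$ is indeed a free-group/Fox computation. You are right that the only substantive step left is matching the induced intersection pairing with the explicit formula ${}^t(x+y)J(I_{2g}-B)y'$, and that this is pure convention-chasing; for this one ultimately has to go to Meyer's paper or a later exposition. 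The Wall non-additivity alternative you mention is exactly the Maslov-index viewpoint alluded to in Remark~\ref{rem:Maslov}.
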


\begin{remark}
\label{rem:Maslov}
Turaev \cite{Turaev} independently found the signature cocycle.
He also studied its relation to the Maslov index.
\end{remark}

\subsection{Handlebody group}
\label{subsec:hbg}

Let $V_g$ be a handlebody of genus $g$.
That is, $V_g$ is obtained by attaching $g$ one-handles to the $3$-ball $D^3$.
We identify $\Sigma_g$ and the boundary of $V_g$ by choosing an orientation-preserving diffeomorphism between them.
We have the following short exact sequence
\begin{equation}
\label{eq:VSigma}
0 \longrightarrow H_2(V_g,\Sigma_g) \overset{\partial_*}{\longrightarrow} H_1(\Sigma_g) \overset{i_*}{\longrightarrow} H_1(V_g) \longrightarrow 0
\end{equation}
which is a part of the homology exact sequence of the pair $(V_g,\Sigma_g)$.
There are properly embedded, oriented and pairwise disjoint disks $D_1,\ldots,D_g$ in $V_g$ whose homology classes (denoted by the same letters) constitute a basis for $H_2(V_g,\Sigma_g)$.
We set $\alpha_i := \partial_*(D_i) \in H_1(\Sigma_g)$ for $i\in \{1,\ldots,g\}$.
Then $\alpha_i$'s extend to a symplectic basis $\alpha_1,\ldots,\alpha_g,\beta_1,\ldots,\beta_g$ for $H_1(\Sigma_g)$.
In what follows, we fix a symplectic basis obtained in this way.
The image of the homology classes $\beta_1, \ldots, \beta_g$ by the map $i_*$ constitute a basis for $H_1(V_g)$.
For simplicity, we denote them by the same letters $\beta_1,\ldots,\beta_g$.


We denote by $\hb$ the \emph{handlebody group} of genus $g$.
It can be considered as a subgroup of $\mcg$.
For any $\varphi \in \hb$, the matrix $\rho(\varphi)$ lies in the subgroup of $\Sp(2g;\Z)$ defined by
\[
\urSp(2g;\Z) :=
\left\{
A \in \Sp(2g;\Z) \mid A= \begin{pmatrix} P & Q \\ O_g & S \end{pmatrix}
\right\},
\]
cf. \cite{Birman75, Hirose06} for details.
The matrices $P$, $Q$ and $S$ satisfy the following relations:
\begin{equation}
\label{eq:PQS}
{}^t\!P S =I_g, \quad
{}^t Q S = {}^t\! S Q.
\end{equation}

\begin{remark}
\label{rem:hbhom}
The group $\hb$ acts naturally on the groups in \eqref{eq:VSigma},
and the maps $\partial_*$ and $i_*$ are $\hb$-module homomorphisms.
The matrix presentation of the action $\varphi_*$ on $H_1(V_g)$ is $S$.
\end{remark}

\subsection{Hyperelliptic handlebody group}
\label{subsec:hhb}

An involution of $\Sigma_g$ is called \emph{hyperelliptic} if it acts on $H_1(\Sigma_g)$ as $-\id$.
We fix an hyperelliptic involution $\iota$ which extends to an involution of $V_g$, as in Figure~\ref{fig:curves}.
\begin{figure}[h]
\begin{center}
\includegraphics{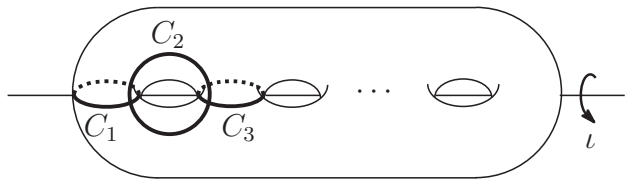}
\end{center}
\caption{The involution $\iota$ of $V_g$ and the curves $C_1$, $C_2$, $C_3$.}
\label{fig:curves}
\end{figure}

The \emph{hyperelliptic mapping class group} $\hyp$ is the centralizer of $\iota$ in $\mcg$:
\[
\hyp := \{ \varphi \in \mcg \mid \varphi \iota = \iota \varphi \}.
\]

\begin{definition}[\cite{HiroseKin}]
\label{dfn:hhb}
The \emph{hyperelliptic handlebody group} $\hhb$ is defined by
\[
\hhb:=\hyp \cap \hb.
\]
\end{definition}

Hirose and Kin \cite[Appendix~A]{HiroseKin} gave a finite presentation of the group $\hhb$.
Moreover they determined the abelianization of $\hhb$ as
\[
\hhb^{\rm abel} \cong \Z \oplus \Z_2 \oplus \Z_2
\quad \text{for $g \ge 2$}.
\]
In fact, using their presentation, it is easy to make this result more explicit.
Let $C_1$, $C_2$ and $C_3$ be simple closed curves on $\Sigma_g$ as in Figure~\ref{fig:curves}.
For each $i\in \{1,2,3\}$ denote by $t_i$ the \emph{right} handed Dehn twist along $C_i$. 
Following \cite{HiroseKin}, set $r_1= t_2^{-1} t_3^{-1} t_1 t_2$ and $s_1 = t_2 t_3 t_1 t_2$.
(Note that in \cite{HiroseKin}, $t_C$ denotes the \emph{left} handed Dehn twist along $C$.)

\begin{lemma}
\label{lem:abel}
When $g=1$, one has $\mathcal{H}(V_1) \cong \Z\, [t_1 s_1] \oplus \Z_2\, [t_1^2 s_1]$.
If $g \ge 2$, then
\[
\hhb^{\rm abel}
\cong
\begin{cases}
\Z\, [s_1] \oplus \Z_2\, [t_1 s_1^{\frac{g}{2}}] \oplus \Z_2\, [r_1] & \text{if $g$ is even}, \\
\Z\, [t_1 s_1^{\frac{g+1}{2}}] \oplus \Z_2\, [t_1^2 s_1^g] \oplus \Z_2\, [r_1] & \text{if $g$ is odd}.
\end{cases}
\]
Here, $[s_1]$ is the class of $s_1$ in $\hhb^{\rm abel}$, 
and $\Z\,[s_1]$ is the infinite cyclic group generated by $[s_1]$, etc.
\end{lemma}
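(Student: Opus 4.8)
The plan is to prove the lemma by abelianizing the finite presentation of $\hhb$ given by Hirose and Kin in \cite[Appendix~A]{HiroseKin}. Recall that abelianizing a presentation $\langle x_1,\dots,x_n\mid w_1,\dots,w_m\rangle$ produces the cokernel of the $m\times n$ integer matrix $R$ whose $(j,k)$ entry is the exponent sum of $x_k$ in $w_j$, and that explicit generators of this cokernel are recovered from a Smith normal form decomposition $URV=D$ with $U,V$ unimodular: the way $V$ transforms the generating set tells us exactly which words in the $x_k$ represent a generator of the free part and generators of the torsion part. Since Hirose and Kin have already identified the isomorphism type as $\Z\oplus\Z_2\oplus\Z_2$ for $g\ge 2$, the only task is to keep track of $V$ and rewrite the resulting generators in terms of $t_1$, $r_1$ and $s_1$.

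Concretely, first I would transcribe the Hirose--Kin generators and relators, correcting for the convention flagged in the text --- their $t_C$ is the \emph{left}-handed twist while ours is the \emph{right}-handed one --- so that each relator becomes an honest linear relation among the generator classes. Second, I would use the relations to eliminate the redundant generators: one expects the various twists and half-twists in the presentation to reduce, in the abelianization, to integer combinations of $[t_1]$, $[s_1]$, $[r_1]$ (and possibly the class of the central involution $\iota\in\hhb$), so that the relation matrix collapses to a small one. Third, I would put that small matrix into Smith normal form while retaining the change-of-basis data; the surviving relations should amount to a $2$-torsion condition on $[r_1]$ together with one further relation coupling $[t_1]$ and $[s_1]$ whose coefficients involve $g$, and it is a $\gcd(2,g)$ computation inside this last relation that splits the argument into the two cases of the statement. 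For $g$ even one may take $[s_1]$ as the free generator and $[t_1s_1^{g/2}]$ as an order-$2$ class; for $g$ odd the relevant gcd being $1$ forces $[t_1 s_1^{(g+1)/2}]$ to be taken as the free generator while $[t_1^2 s_1^{g}]$ becomes the order-$2$ class. Retaining the change-of-basis matrix throughout is what lets one read off precisely these monomials rather than just the abstract isomorphism type.

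The case $g=1$ I would treat separately: there $\mathcal{H}(V_1)$ coincides with $\operatorname{Mod}(V_1)$, the class $[r_1]$ degenerates, and the presentation is short enough to abelianize directly in $t_1$ and $s_1$, giving $\Z\,[t_1s_1]\oplus\Z_2\,[t_1^2s_1]$ without any case distinction.

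The main obstacle is organizational rather than conceptual: faithfully importing the Hirose--Kin presentation with all orientation and twist conventions correctly dualized, and then carrying out the integral linear algebra without sign errors, because a single misplaced sign changes which power of $s_1$ appears and can interchange a $\Z$-summand with a $\Z_2$-summand. A secondary point needing a short verification is that the displayed elements --- $t_1s_1^{g/2}$ for $g$ even, and $t_1^2 s_1^{g}$ and $t_1 s_1^{(g+1)/2}$ for $g$ odd --- genuinely have the stated orders and, together with $[r_1]$, generate all of $\hhb^{\rm abel}$, i.e. that the displayed direct-sum decomposition really splits; this is immediate once the matrices $U$, $V$ and $D$ are in hand.
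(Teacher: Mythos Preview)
Your approach is essentially the paper's: the authors abelianize the Hirose--Kin presentation to reach exactly the reduced system you anticipate---generators $[r_1]$, $[s_1]$, $[t_1]$ with relations $2[r_1]=0$, $4[t_1]+2g[s_1]=0$, $2(g+1)[t_1]+g(g+1)[s_1]=0$---and then dismiss your Step~3 as ``a direct computation'' (note there are \emph{two} coupling relations between $[t_1]$ and $[s_1]$, both needed when $g$ is even). For $g=1$ the paper cites Wajnryb's description of $\operatorname{Mod}(V_1)$ rather than Hirose--Kin, but otherwise the arguments coincide.
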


\begin{proof}
The case $g=1$ follows from the fact that $\mathcal{H}(V_1) \cong \operatorname{Mod}(V_1)$ and a result of Wajnryb \cite[Theorem 14]{Wajnryb}.

Assume that $g\ge 2$.
Using \cite[Theorem A.8]{HiroseKin}, one sees that $\hhb^{\rm abel}$ is generated by $[r_1]$, $[s_1]$ and $[t_1]$ with the relations
\[
2 [r_1] =0, \quad
4 [t_1] + 2g [s_1] = 0, \quad
2(g + 1) [t_1] + g(g + 1) [s_1] = 0.
\]
The assertion follows from these relations by a direct computation.
\end{proof}

The following corollary to Lemma~\ref{lem:abel} will be used in Section~\ref{subsec:pr11} to prove Theorem~\ref{thm:main}.

\begin{corollary}
\label{cor:mu}
Let $g\ge 1$.
There is a unique homomorphism $\mu\colon \mathcal{H}(V_g) \to \Z$ satisfying the following property:
\begin{enumerate}
\item[$(1)$] If $g$ is even, $\mu(s_1)=1$ and $\mu(t_1)=-g / 2$;
\item[$(2)$] If $g$ is odd, $\mu(t_1)=-g$, $\mu(s_1)=2$, and thus $\mu(t_1s_1^{\frac{g+1}{2}})=1$.
\end{enumerate}
Moreover, the first cohomology group $H^1(\mathcal{H}(V_g)) = \Hom(\mathcal{H}(V_g),\Z)$ is an infinite cyclic group generated by $\mu$.
\end{corollary}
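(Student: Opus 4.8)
The plan is to deduce Corollary~\ref{cor:mu} directly from Lemma~\ref{lem:abel} via the standard identification
\[
H^1(\hhb)=\Hom(\hhb,\Z)=\Hom(\hhb^{\rm abel},\Z),
\]
where the last equality holds because any homomorphism to the torsion-free group $\Z$ factors through the abelianization and annihilates its torsion. By Lemma~\ref{lem:abel}, for every $g\ge 1$ the group $\hhb^{\rm abel}$ decomposes as $\Z e\oplus T$ with $T$ a finite $2$-group, where one may take the free generator to be $e=[s_1]$ if $g$ is even and $e=[t_1s_1^{(g+1)/2}]$ if $g$ is odd (for $g=1$ this reads $e=[t_1s_1]$, using the two-summand form $\mathcal{H}(V_1)^{\rm abel}\cong\Z[t_1s_1]\oplus\Z_2[t_1^2s_1]$). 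Consequently $\Hom(\hhb^{\rm abel},\Z)$ is infinite cyclic, a generator being the homomorphism $\mu$ determined by $\mu(e)=1$ and $\mu|_T=0$. This already establishes the ``moreover'' part, provided this $\mu$ is the one described in (1) and (2).

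To match $\mu$ with the stated normalization I would compute its values on $t_1$ and $s_1$ from the structure in Lemma~\ref{lem:abel}. If $g$ is even, the class $[t_1s_1^{g/2}]$ lies in $T$, so $0=\mu(t_1s_1^{g/2})=\mu(t_1)+\frac{g}{2}\mu(s_1)$, and since $\mu(s_1)=\mu(e)=1$ this gives $\mu(t_1)=-g/2$, i.e.\ (1). If $g$ is odd, the class $[t_1^2s_1^{g}]$ lies in $T$, so $2\mu(t_1)+g\mu(s_1)=0$; combined with $\mu(t_1)+\frac{g+1}{2}\mu(s_1)=\mu(e)=1$ (which for $g=1$ is $\mu(t_1)+\mu(s_1)=1$), the unique solution is $\mu(s_1)=2$ and $\mu(t_1)=-g$, giving (2), and in particular $\mu(t_1s_1^{(g+1)/2})=1$ by construction.

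Uniqueness is then immediate: if $\nu\colon\hhb\to\Z$ satisfies (1) or (2), the same arithmetic shows $\nu(e)=1=\mu(e)$, so $\nu-\mu$ is a homomorphism $\hhb^{\rm abel}\to\Z$ vanishing on $e$ and on $T$, hence on $\Z e\oplus T=\hhb^{\rm abel}$; thus $\nu=\mu$. The argument is essentially bookkeeping once Lemma~\ref{lem:abel} is available; the only point requiring a little care is the separate treatment of $g=1$, where the abelianization has the two-summand shape above, and checking that the chosen free generator $e$ is normalized to $+1$ rather than $-1$. I do not foresee a genuine obstacle.
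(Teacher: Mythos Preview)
Your argument is correct and is precisely the intended deduction: the paper states this as an immediate corollary to Lemma~\ref{lem:abel} without giving a separate proof, and your bookkeeping with $\Hom(\hhb^{\rm abel},\Z)$ is exactly how one extracts $\mu$ and its values on $t_1$ and $s_1$ from the explicit direct-sum decompositions. The only cosmetic point is that uniqueness is even more immediate than you phrase it, since $[r_1]$ (and the other torsion generator) is automatically killed by any homomorphism to $\Z$, so the conditions in (1) or (2) already pin down $\nu$ on a full generating set of $\hhb^{\rm abel}$.
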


\section{Handlebody bundles over $S^1$} \label{sec:hbb}

\subsection{Mapping torus} \label{subsec:mtorus}

Let $I=[0,1]$ be the unit interval.
By identifying the endpoints of $I$, we obtain the circle $S^1 = [0,1]/ 0 \sim 1$.
Let $\ell\colon I \to S^1$ be the natural projection.
For $t\in I$, we set $[t]:= \ell(t)$.
Choose $[0]$ as a base point of $S^1$.
Then the fundamental group $\pi_1(S^1)$ is an infinite cyclic group generated by the homotopy class of $\ell$.

In what follows, we use the following cell decomposition of $S^1$: the $0$-cell is $e^0= [0]$ and the 1-cell is $e^1 = S^1 \setminus e^0$.
The map $\ell$ induces an orientation of $e^1$.

Let $\varphi \in \hb$.
The \emph{mapping torus} of $\varphi$ is the quotient space
\[
M_{\varphi}:= (I\times V_g)/ (0,x) \sim (1,\varphi(x)).
\]
For $(t,x) \in I \times V_g$, its class in $M_{\varphi}$ is denoted by $[t,x]$.
The natural projection
$
\pi\colon M_{\varphi} \to S^1, [t,x] \mapsto [t]
$
is an oriented $V_g$-bundle,
and the total space $M_{\varphi}$ is a compact 4-manifold with boundary equipped with a natural orientation.
The pullback of $M_{\varphi} \to S^1$ by $\ell$ is a trivial $V_g$-bundle over $I$,
and its trivialization is given by the map
\begin{equation}
\label{eq:Phi}
\Phi\colon I \times V_g \to M_{\varphi}, \quad
(t,x) \mapsto [t,x].
\end{equation}
The following composition of maps coincides with $\varphi$:
\[
V_g \overset{0\times \id}{\cong} \{0\} \times V_g \overset{\Phi(0,\cdot)}{\longrightarrow}
\pi^{-1}([0]) = \pi^{-1}([1])
\overset{\Phi(1,\cdot)^{-1}}{\longrightarrow}
\{ 1\} \times V_g \overset{1\times \id}{\cong} V_g.
\]
Therefore, the monodromy of $M_{\varphi} \to S^1$ along $\ell$ is equal to the mapping class $\varphi$.
As was mentioned in Remark~\ref{rem:hbhom}, the groups $H_2(V_g,\Sigma_g)$, $H_1(\Sigma_g)$, and $H_1(V_g)$ are $\hb$-modules.
Thus, these groups become $\pi_1(S^1)$-modules; the homotopy class of $\ell$, which is a generator of $\pi_1(S^1)$, acts as the monodromy $\varphi \in \hb$.

\subsection{Second homology of the mapping torus}
\label{subsec:2hom}

For a non-negative integer $q\ge 0$, let $\HH_q(V_g)$ be the local system on $S^1$ which comes from the $V_g$-bundle $\pi\colon M_{\varphi} \to S^1$,
and whose fiber at $x\in S^1$ is the $q$-th homology group $H_q(\pi^{-1}(x))$.
Similarly, we consider the local system $\HH_q(V_g,\Sigma_g)$ whose fiber at $x\in S^1$ is the $q$-th relative homology group $H_q(\pi^{-1}(x),\partial \pi^{-1}(x))$.

Consider the Serre homology spectral sequence of the $V_g$-bundle $M_{\varphi} \to S^1$.
It degenerates at the $E^2$ page, which is given by $E^2_{p,q} =H_p (S^1; \HH_q(V_g))$.
Since $H_2(V_g)=0$ and the base space $S^1$ is $1$-dimensional,
we obtain
\[
H_2(M_{\varphi}) \cong E^{\infty}_{1,1} \cong E^2_{1,1}
= H_1(S^1; \HH_1(V_g)).
\]
Moreover, using the cellular homology of $S^1$ with coefficients in $\HH_1(V_g)$, we have
\begin{align*}
H_1(S^1; \HH_1(V_g)) &\cong
\Ker (\partial \colon C_1(S^1; \HH_1(V_g)) \to
C_0(S^1; \HH_1(V_g))) \\
&= \Ker(\partial \colon \Z e^1 \otimes H_1(V_g) \to
\Z e^0 \otimes H_1(V_g) = H_1(V_g)),
\end{align*}
where the boundary map is given by
\[
\partial (e^1 \otimes \alpha) = \ell_*(\alpha) - \alpha
= (\Phi(0,\cdot)^{-1} \circ \Phi(1,\cdot))_* (\alpha) - \alpha
=\varphi^{-1}_* (\alpha) - \alpha.
\]
In summary, we have proved the following lemma.
In the statement, $H_1(V_g)^{\pi_1(S^1)}$ is the space of invariants under the action of $\pi_1(S^1)$, i.e., 
$
H_1(V_g)^{\pi_1(S^1)} = \{ \alpha \in H_1(V_g) \mid \varphi_*(\alpha) = \alpha \}
$.

\begin{lemma}
\label{lem:H2abs}
We have
$
H_2(M_{\varphi}) \cong H_1(S^1; \HH_1(V_g)) \cong H_1(V_g)^{\pi_1(S^1)}
$.
\end{lemma}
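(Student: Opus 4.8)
The plan is to extract both isomorphisms directly from the Serre homology spectral sequence of the fiber bundle $\pi\colon M_\varphi \to S^1$ with fiber $V_g$, whose $E^2$-page is $E^2_{p,q} = H_p(S^1; \HH_q(V_g))$ with local coefficients. The first observation I would make is that since $S^1$ is a one-dimensional CW complex, the local homology $H_p(S^1; \HH_q(V_g))$ vanishes for $p \ge 2$, so the $E^2$-page is concentrated in the two columns $p = 0, 1$. Consequently every differential $d^r$ with $r \ge 2$ has source in a column $p \in \{0,1\}$ and target in a column $p - r \le -1$, hence vanishes, and likewise every differential landing in $E^r_{p,q}$ originates from a column $p + r \ge 2$, which is zero; thus the spectral sequence degenerates at $E^2$.

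Next I would isolate the total degree $2$ contributions, namely $E^\infty_{2,0}$, $E^\infty_{1,1}$, and $E^\infty_{0,2}$. The term $E^\infty_{2,0} = E^2_{2,0}$ vanishes by the dimension of $S^1$, and $E^\infty_{0,2} = E^2_{0,2} = H_0(S^1; \HH_2(V_g))$ vanishes because $H_2(V_g) = 0$ (the handlebody $V_g$ deformation retracts onto a wedge of $g$ circles). This leaves $H_2(M_\varphi) \cong E^\infty_{1,1} = E^2_{1,1} = H_1(S^1; \HH_1(V_g))$, which is the first of the two claimed isomorphisms. (Equivalently, one could run the Wang exact sequence of a bundle over $S^1$, which produces the same identification without spectral-sequence machinery.)

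For the second isomorphism, I would compute $H_1(S^1; \HH_1(V_g))$ using the cellular chain complex of $S^1$ with local coefficients, built from the CW structure with a single $0$-cell $e^0$ and a single $1$-cell $e^1$. This chain complex reads $\Z e^1 \otimes H_1(V_g) \xrightarrow{\partial} \Z e^0 \otimes H_1(V_g)$, and the boundary operator is the difference between the action induced along the $1$-cell and the identity. Unwinding the trivialization $\Phi$ of \eqref{eq:Phi}, which identifies the monodromy along $\ell$ with $\varphi$, this action is $\varphi^{-1}_*$, so $\partial(e^1 \otimes \alpha) = \varphi^{-1}_*(\alpha) - \alpha$; hence $H_1(S^1; \HH_1(V_g)) = \Ker \partial = \Ker(\varphi^{-1}_* - \id)$. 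Since $\varphi_*$ is an automorphism of $H_1(V_g)$, one has $\Ker(\varphi^{-1}_* - \id) = \Ker(\varphi_* - \id) = H_1(V_g)^{\pi_1(S^1)}$, giving the final isomorphism.

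The argument is essentially formal, so there is no deep obstacle; the points requiring the most care are purely bookkeeping. The first is confirming the degeneration rigorously, which follows cleanly from the two-column shape of $E^2$. The second, and the more error-prone one, is the orientation/convention discrepancy that produces $\varphi^{-1}_*$ rather than $\varphi_*$ in the boundary operator: I would track the trivialization $\Phi$ carefully and then dispose of the discrepancy by invoking the invertibility of $\varphi_*$, which identifies the two fixed-point subspaces.
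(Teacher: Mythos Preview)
Your proposal is correct and follows essentially the same approach as the paper: both use the Serre spectral sequence of $M_\varphi \to S^1$, observe that $H_2(V_g)=0$ and $\dim S^1 = 1$ force degeneration and leave only $E^2_{1,1}$ in total degree~$2$, and then compute $H_1(S^1;\HH_1(V_g))$ via the cellular chain complex with local coefficients, identifying the boundary map as $\varphi_*^{-1}-\id$. Your added remarks (explicit verification of degeneration, the Wang sequence alternative, and the passage from $\Ker(\varphi_*^{-1}-\id)$ to $\Ker(\varphi_*-\id)$) are minor elaborations on the same argument.
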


Similarly, for the relative homology of the pair $(M_{\varphi},\partial M_{\varphi})$,
there is a spectral sequence converging to $H_*(M_{\varphi},\partial M_{\varphi})$
such that $E^2_{p,q} = H_p(S^1; \HH_q(V_g,\Sigma_g))$. 
This degenerates at the $E^2$ page, too.
Since $H_1(V_g,\Sigma_g) = 0$, we obtain
\[
H_2(M_{\varphi},\partial M_{\varphi}) \cong E^{\infty}_{0,2} \cong E^2_{0,2}
= H_0(S^1; \HH_2(V_g,\Sigma_g)).
\]
By the same argument as above, we obtain the following lemma.
In the statement,
$H_2(V_g,\Sigma_g)_{\pi_1(S^1)}$ is the space of coinvariants under the action of $\pi_1(S^1)$, i.e.,
the quotient of $H_2(V_g,\Sigma_g)$ by the subgroup generated by the set $\{ \varphi_*(\delta) - \delta \mid \delta \in H_2(V_g,\Sigma_g)$\}.

\begin{lemma}
\label{lem:H2rel}
We have
$
H_2(M_{\varphi},\partial M_{\varphi}) \cong H_0(S^1; \HH_2(V_g,\Sigma_g)) \cong
H_2(V_g,\Sigma_g)_{\pi_1(S^1)}
$.
\end{lemma}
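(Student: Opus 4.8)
The plan is to run, for the bundle pair $\pi\colon(M_\varphi,\partial M_\varphi)\to S^1$, exactly the argument used in the proof of Lemma~\ref{lem:H2abs}; indeed, the discussion preceding the statement has already reduced the claim to two routine points. The fiber of this bundle pair over $x\in S^1$ is $(\pi^{-1}(x),\partial\pi^{-1}(x))\cong(V_g,\Sigma_g)$, and the associated relative Serre homology spectral sequence has $E^2$ page $E^2_{p,q}=H_p(S^1;\HH_q(V_g,\Sigma_g))$, converging to $H_{p+q}(M_\varphi,\partial M_\varphi)$.

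First I would record the degeneration: since $S^1$ is $1$-dimensional, $E^2_{p,q}=0$ for $p\notin\{0,1\}$, so every differential $d^r$ with $r\ge2$ has trivial target, and the spectral sequence degenerates at $E^2$. In total degree $2$ the only potentially non-zero terms are $E^2_{0,2}=H_0(S^1;\HH_2(V_g,\Sigma_g))$ and $E^2_{1,1}=H_1(S^1;\HH_1(V_g,\Sigma_g))$; the latter vanishes because its coefficient system has fiber $H_1(V_g,\Sigma_g)=0$ (by \eqref{eq:VSigma}). Hence $H_2(M_\varphi,\partial M_\varphi)\cong E^2_{0,2}=H_0(S^1;\HH_2(V_g,\Sigma_g))$.

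It then remains to identify $H_0(S^1;\HH_2(V_g,\Sigma_g))$, which I would do via the cellular chain complex of $S^1$ for the cell structure of Section~\ref{subsec:mtorus} with coefficients in $\HH_2(V_g,\Sigma_g)$, just as in the proof of Lemma~\ref{lem:H2abs}. This group is the cokernel of $\partial\colon\Z e^1\otimes H_2(V_g,\Sigma_g)\to\Z e^0\otimes H_2(V_g,\Sigma_g)=H_2(V_g,\Sigma_g)$ with $\partial(e^1\otimes\delta)=\ell_*(\delta)-\delta=\varphi^{-1}_*(\delta)-\delta$. Since $\varphi_*$ is an automorphism of $H_2(V_g,\Sigma_g)$, the image of $\partial$ is the subgroup generated by $\{\varphi_*(\delta)-\delta\mid\delta\in H_2(V_g,\Sigma_g)\}$, so the cokernel is $H_2(V_g,\Sigma_g)_{\pi_1(S^1)}$, as asserted. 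There is no serious obstacle: everything is formal once Lemma~\ref{lem:H2abs} is in hand, the only inputs needing attention being the existence and $E^2$-degeneration of the relative spectral sequence, the vanishing $H_1(V_g,\Sigma_g)=0$, and the harmless passage between $\varphi_*$ and $\varphi^{-1}_*$ in the boundary map, which leaves the image unchanged.
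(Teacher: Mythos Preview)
Your proposal is correct and follows precisely the paper's own approach: the text preceding Lemma~\ref{lem:H2rel} already sets up the relative Serre spectral sequence, notes its $E^2$-degeneration, and uses $H_1(V_g,\Sigma_g)=0$ to isolate $E^2_{0,2}$, after which the paper simply writes ``By the same argument as above'' to invoke the cellular-chain computation you spell out. Your explicit remark that passing from $\varphi^{-1}_*$ to $\varphi_*$ does not change the image of the boundary map is a welcome clarification the paper leaves implicit.
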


\subsection{Description of the inclusion homomorphism}
\label{subsec:incl}

Recall that the short exact sequence \eqref{eq:VSigma} is $\hb$-equivariant.
Let $\alpha \in H_1(V_g)^{\pi_1(S^1)}$ be a $\varphi_*$-invariant homology class.
Pick an element $\tilde{\alpha} \in H_1(\Sigma_g)$ such that $i_*(\tilde{\alpha}) = \alpha$.
Then $\varphi_*(\tilde{\alpha}) - \tilde{\alpha} \in \Ker (i_*) = \Im (\partial_*)$.

\begin{definition}
\label{dfn:dalpha}
$d(\alpha) := [\partial_*^{-1} ( \varphi_*(\tilde{\alpha}) - \tilde{\alpha})]
\in H_2(V_g,\Sigma_g)_{\pi_1(S^1)}$.
\end{definition}

It is easy to see that $d(\alpha)$ is independent of the choice of $\tilde{\alpha}$.
Thus we obtain a well-defined map $d\colon H_1(V_g)^{\pi_1(S^1)} \to H_2(V_g,\Sigma_g)_{\pi_1(S^1)}$.

\begin{proposition}
\label{prop:incl}
The following diagram is commutative:
\[
\xymatrix{
H_1(V_g)^{\pi_1(S^1)} \ar[r]^-{d} \ar[d]^{\cong} & H_2(V_g,\Sigma_g)_{\pi_1(S^1)} \ar[d]^{\cong} \\
H_2(M_{\varphi}) \ar[r]^-{i_*} & H_2(M_{\varphi},\partial M_{\varphi}),
}
\]
where the bottom horizontal arrow is the inclusion homomorphism,
and the vertical arrows are the isomorphisms in Lemmas~\ref{lem:H2abs} and \ref{lem:H2rel}.
\end{proposition}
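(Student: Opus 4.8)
The plan is to check commutativity at the level of cells and cycles, tracing a $\varphi_*$-invariant class $\alpha \in H_1(V_g)^{\pi_1(S^1)}$ through both composites and showing the results agree in $H_2(M_\varphi,\partial M_\varphi)$. Concretely, I would fix the cell decomposition of $S^1$ used in Section~\ref{subsec:mtorus}, so that a $1$-cycle in $C_1(S^1;\HH_1(V_g))$ representing the image of $\alpha$ under the vertical iso of Lemma~\ref{lem:H2abs} is $e^1 \otimes \tilde\alpha'$, for a suitable lift; the subtlety is that $\alpha \in H_1(V_g)$, not $H_1(\Sigma_g)$, so one must first produce an honest $2$-cycle in $M_\varphi$. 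The natural model: pick $\tilde\alpha \in H_1(\Sigma_g)$ with $i_*(\tilde\alpha)=\alpha$, realize $\tilde\alpha$ by a $1$-cycle, and use the $V_g$-bundle structure and the trivialization $\Phi$ from \eqref{eq:Phi} to sweep out a relative $2$-chain in $I\times V_g$; its image in $M_\varphi$ fails to close up by exactly $\varphi_*(\tilde\alpha)-\tilde\alpha$ on the $\{0\}\sim\{1\}$ fiber, which lies in $\Ker(i_*)=\Im(\partial_*)$, so capping off with $\partial_*^{-1}(\varphi_*(\tilde\alpha)-\tilde\alpha)$ (a relative $2$-chain in a single fiber) produces a genuine absolute $2$-cycle in $M_\varphi$ representing the class corresponding to $\alpha$.

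Next I would push this $2$-cycle into $(M_\varphi,\partial M_\varphi)$ and identify its class under the iso of Lemma~\ref{lem:H2rel}. Under $i_*$, the "swept" part of the cycle — the image of a relative $1$-cycle times $I$ — becomes a boundary rel $\partial M_\varphi$ (it can be pushed entirely into the boundary, since $\tilde\alpha$ was carried by $\Sigma_g = \partial V_g$), so the only surviving contribution is the capping relative $2$-chain $\partial_*^{-1}(\varphi_*(\tilde\alpha)-\tilde\alpha)$ living in a single fiber $\pi^{-1}(\ast)$. Under the edge homomorphism $H_2(M_\varphi,\partial M_\varphi)\cong H_0(S^1;\HH_2(V_g,\Sigma_g))$, a relative $2$-cycle supported in one fiber maps to its own homology class in that fiber, i.e. to the class of $\partial_*^{-1}(\varphi_*(\tilde\alpha)-\tilde\alpha)$ in the coinvariants $H_2(V_g,\Sigma_g)_{\pi_1(S^1)}$. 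That is precisely $d(\alpha)$ by Definition~\ref{dfn:dalpha}, so the square commutes.

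To make this rigorous I would phrase the two vertical isomorphisms via the cellular filtration of $S^1$ rather than just quoting the $E^2$-page: the iso of Lemma~\ref{lem:H2abs} sends a class to the $1$-chain obtained by restricting a representing cycle to $\pi^{-1}(e^1)$ and reading off its fiber homology class, and dually for Lemma~\ref{lem:H2rel}. Then naturality of the Serre spectral sequence for the map of pairs $(M_\varphi,\emptyset)\to(M_\varphi,\partial M_\varphi)$ gives a map $E^r_{1,1}\to E^r_{0,2}$ on the abutments induced by $i_*$; identifying this map with $d$ is the content of the proposition. The main obstacle is bookkeeping at the chain level — making precise "sweep out with $\Phi$", controlling the boundary terms so that the swept part really dies in the relative group, and matching conventions so the connecting-type map coming from the spectral sequence's differential-$d_1$ description of $H_1(S^1;-)$ lines up with the explicit $\partial_*^{-1}(\varphi_*(\tilde\alpha)-\tilde\alpha)$ formula — but no single step is deep; it is a diagram chase once the identifications of the two isos are spelled out cellularly.
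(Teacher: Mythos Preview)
Your proposal is correct and follows essentially the same route as the paper: construct an explicit absolute $2$-cycle in $M_\varphi$ by sweeping a $1$-cycle $\tilde a\in Z_1(\Sigma_g)$ representing a lift $\tilde\alpha$ along $I$ via $\Phi$ and capping the mismatch at the fiber over $e^0$ with a $2$-chain $\sigma_{\varphi,\alpha}\in S_2(V_g)$ bounding $\varphi^{-1}_\sharp(\tilde a)-\tilde a$; then observe that the swept part $\Phi_\sharp(I\times\tilde a)$ lies \emph{literally} in $\partial M_\varphi$ (not merely ``becomes a boundary''), so that in $Z_2(M_\varphi,\partial M_\varphi)$ only the fiberwise cap $-e^0\times\sigma_{\varphi,\alpha}$ survives, which under the edge isomorphism of Lemma~\ref{lem:H2rel} is exactly $d(\alpha)$. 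The paper isolates the verification that the left vertical isomorphism sends $\alpha$ to this explicit cycle as a separate lemma (Lemma~\ref{lem:aZa}) argued via the filtration $M^{(0)}\subset M^{(1)}=M_\varphi$, and handles the sign bookkeeping by noting $\varphi_*(\tilde\alpha)-\tilde\alpha=-(\varphi^{-1}_*(\tilde\alpha)-\tilde\alpha)$ in the coinvariants; otherwise your outline matches the paper's argument step for step.
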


\subsection{Proof of Proposition~\ref{prop:incl}} \label{subsec:pr34}

In this section, for a topological space $X$,
we denote by $S_n(X)$ and $Z_n(X)$ the groups of singular $n$-chains and singular $n$-cycles, respectively.

Let $\alpha \in H_1(V_g)^{\pi_1(S^1)}$.
Pick its lift $\tilde{\alpha} \in H_1(\Sigma_g)$ such that $i_*(\tilde{\alpha}) = \alpha$.
Take a singular $1$-cycle $\tilde{a}\in Z_1(\Sigma_g)$ representing the homology class $\tilde{\alpha}$.
Then, $\varphi^{-1}_{\sharp}(\tilde{a}) - \tilde{a}$ is a singular $1$-boundary in $V_g$ since $\varphi_*^{-1}(\tilde{\alpha}) - \tilde{\alpha}\in \Ker(i_*)$.
Therefore, there exists $\sigma_{\varphi,\alpha}\in S_2(V_g)$ such that $\partial \sigma_{\varphi,\alpha} = \varphi^{-1}_{\sharp}(\tilde{a}) - \tilde{a}$.

First we compute the composition of $d$ and the right vertical map.
We claim that $d(\alpha)$ is represented by the relative $2$-cycle $-\sigma_{\varphi,\alpha} \in Z_2(V_g,\Sigma_g)$.
This follows from the equality
$\varphi_*(\tilde{\alpha}) -\tilde{\alpha} = - (\varphi^{-1}_*(\tilde{\alpha}) - \tilde{\alpha})$ in $H_1(\Sigma_g)_{\pi_1(S^1)}$
and the relation $\partial \sigma_{\varphi,\alpha} = \varphi^{-1}_{\sharp}(\tilde{a}) -\tilde{a}$.
Hence, the right vertical map sends $d(\alpha)$ to the homology class represented by the relative $2$-cycle $-e^0 \times \sigma_{\varphi,\alpha}\in Z_2(M_{\varphi},\partial M_{\varphi})$, where the symbol $\times$ means the cross product.

Next we compute the composition of the left vertical map and $i_*$.
For this purpose, we set
\[
\mathcal{Z}_{\alpha} := \Phi_{\sharp} ( I \times \tilde{a} ) - e^0 \times \sigma_{\varphi,\alpha} \in S_2(M_{\varphi}).
\]
Here, $\Phi$ is the map defined in \eqref{eq:Phi}, and the unit interval is regarded as a singular $1$-chain in the obvious way.
Actually, $\mathcal{Z}_{\alpha}$ is a $2$-cycle in $M_{\varphi}$.

\begin{lemma}
\label{lem:aZa}
The isomorphism in Lemma~\ref{lem:H2abs} sends $\alpha$ to the homology class of $\mathcal{Z}_{\alpha}$.
\end{lemma}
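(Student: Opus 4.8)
The plan is to trace the isomorphism $H_2(M_{\varphi}) \cong H_1(S^1;\HH_1(V_g))$ of Lemma~\ref{lem:H2abs} through the identifications used to establish it, and check that the explicit $2$-cycle $\mathcal{Z}_{\alpha}$ maps to the correct cellular $1$-cycle. Recall that the isomorphism was obtained from the Serre spectral sequence of $\pi\colon M_{\varphi}\to S^1$, using the identification $E^2_{1,1} = H_1(S^1;\HH_1(V_g))$ and the fact that $H_2(V_g)=0$ forces the edge homomorphism $H_2(M_{\varphi})\to E^{\infty}_{1,1}$ to be an isomorphism. So the first step is to replace the spectral-sequence bookkeeping by a concrete chain-level description: using the cell decomposition $S^1 = e^0 \cup e^1$, a class in $H_1(S^1;\HH_1(V_g))$ is represented by $e^1\otimes\alpha$ with $\alpha\in H_1(V_g)^{\pi_1(S^1)} = \ker(\varphi^{-1}_*-\id)$, and the claim is that $\mathcal{Z}_{\alpha}$ represents the preimage of $e^1\otimes\alpha$ under the edge isomorphism.

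First I would verify that $\mathcal{Z}_{\alpha}$ really is a cycle in $M_{\varphi}$. Computing $\partial\mathcal{Z}_{\alpha}$: the term $\Phi_{\sharp}(I\times\tilde{a})$ has boundary $\Phi_{\sharp}(\{1\}\times\tilde{a}) - \Phi_{\sharp}(\{0\}\times\tilde{a}) - \Phi_{\sharp}(I\times\partial\tilde{a})$. Since $\tilde{a}$ is a cycle, $\partial\tilde{a}=0$. Now $\Phi(1,x) = [1,x] = [0,\varphi(x)]$ by the gluing relation in $M_{\varphi}$, so $\Phi_{\sharp}(\{1\}\times\tilde{a})$ and $\Phi_{\sharp}(\{0\}\times\tilde{a})$ both live over $e^0=[0]$, and their difference equals $e^0\times(\varphi_{\sharp}(\tilde{a}) - \tilde{a})$ up to the identification of $\pi^{-1}([0])$ with $V_g$. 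Comparing with $\partial(e^0\times\sigma_{\varphi,\alpha}) = e^0\times(\varphi^{-1}_{\sharp}(\tilde{a})-\tilde{a})$, I need to reconcile $\varphi_{\sharp}$ versus $\varphi^{-1}_{\sharp}$ — this is exactly where one must be careful about the direction of the monodromy, and where the sign conventions of Section~\ref{subsec:mtorus} (the boundary map $\partial(e^1\otimes\alpha) = \varphi^{-1}_*(\alpha)-\alpha$) enter. I expect that after correctly identifying $\Phi(0,\cdot)$ and $\Phi(1,\cdot)$ with the two trivializations, the boundary terms cancel precisely because $\alpha$ is $\varphi_*$-invariant (equivalently $\varphi^{-1}_*$-invariant), so $\mathcal{Z}_{\alpha}\in Z_2(M_{\varphi})$.

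Next I would identify the image of $[\mathcal{Z}_{\alpha}]$ under the edge homomorphism. Filtering $M_{\varphi}$ by preimages of skeleta of $S^1$, the $E^1$-term $E^1_{1,1}$ is $C_1^{\mathrm{cell}}(S^1;\HH_1(V_g)) = \Z e^1\otimes H_1(V_g)$, and a class in $H_2$ of the total space maps to $E^{\infty}_{1,1}\subset E^1_{1,1}$ by taking the part of a representing cycle lying over the $1$-cell, modulo the image of lower filtration. The chain $\mathcal{Z}_{\alpha}$ decomposes exactly as (a piece over $e^1$, namely $\Phi_{\sharp}(I\times\tilde{a})$) plus (a piece over $e^0$, namely $-e^0\times\sigma_{\varphi,\alpha}$). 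The piece over $e^1$ corresponds to $e^1\otimes[\tilde{a}]$ in $H_1$ of the fiber; but $\tilde{a}\in Z_1(\Sigma_g)$ and its image in $H_1(V_g)$ is $i_*(\tilde{\alpha})=\alpha$, so the class is $e^1\otimes\alpha$. The main obstacle — and the part requiring the most care — is bookkeeping the filtration and sign conventions so that this correspondence comes out with the correct sign and so that the $e^0$-piece is genuinely in the lower filtration level and hence invisible at $E_{1,1}$; this amounts to checking compatibility between the cross-product orientation on $I\times\tilde{a}$, the orientation of $e^1$ induced by $\ell$, and the spectral sequence's edge map, all of which were fixed in Section~\ref{subsec:mtorus}. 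Once this is matched up, $[\mathcal{Z}_{\alpha}]$ maps to $e^1\otimes\alpha$, which is the class corresponding to $\alpha\in H_1(V_g)^{\pi_1(S^1)}$ under the isomorphism of Lemma~\ref{lem:H2abs}, completing the proof.
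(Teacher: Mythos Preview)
Your proposal is correct and follows essentially the same route as the paper: filter $M_{\varphi}$ by preimages of the skeleta of $S^1$, identify $E^2_{1,1}$ with $\Ker(\partial_*\colon H_2(M,M^{(0)})\to H_1(M^{(0)}))$, observe that $\Phi_{\sharp}(I\times\tilde{a})$ represents $e^1\otimes\alpha$ there, and use that $e^0\times\sigma_{\varphi,\alpha}$ lies in $S_2(M^{(0)})$ so that $[\Phi_{\sharp}(I\times\tilde{a})]=[\mathcal{Z}_{\alpha}]$ in $H_2(M,M^{(0)})$. One small correction to your boundary computation: with the paper's convention $(0,x)\sim(1,\varphi(x))$ one has $[1,x]=[0,\varphi^{-1}(x)]$, so $\Phi_{\sharp}(\{1\}\times\tilde{a})-\Phi_{\sharp}(\{0\}\times\tilde{a}) = e^0\times(\varphi^{-1}_{\sharp}(\tilde{a})-\tilde{a})$, which cancels exactly against $\partial(e^0\times\sigma_{\varphi,\alpha})$ without any further reconciliation needed.
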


\begin{proof}
We need to inspect the spectral sequence involved in Lemma~\ref{lem:H2abs}.
For simplicity we denote $M=M_{\varphi}$,
and for every non-negative integer $q\ge 0$
let $M^{(q)}$ be the inverse image of the $q$-skeleton of $S^1$ by the projection map $\pi$.
Thus we have $\emptyset \subset M^{(0)} = \pi^{-1}([0]) \subset M^{(1)} = M$.
Accordingly, the singular chain complex $S_*(M)$ has an increasing filtration: $\{ 0\} \subset S_*(M^{(0)}) \subset S_*(M^{(1)}) =S_*(M)$.
The associated spectral sequence is the one that we consider.

Now let $\alpha \in H_1(V_g)^{\pi_1(S^1)}$.
There is an isomorphism
\[
E^2_{1,1} = H_1(S^1; \HH_1(V_g)) 
 \cong \Ker(\partial_* \colon H_2(M,M^{(0)}) \to H_1(M^{(0)}) ),
\]
under which the homology class $[e^1 \otimes \alpha]$ is mapped to the homology class of the relative $2$-cycle $\Phi_{\sharp}(I \times \tilde{a})$.
However, since $e^0 \times \sigma_{\varphi,\alpha} \in S_2(M^{(0)})$, it holds that
\[
[\Phi_{\sharp}(I \times \tilde{a})]
= [\Phi_{\sharp}(I \times \tilde{a}) - e^0 \times \sigma_{\varphi,\alpha}] = [\mathcal{Z}_{\alpha}]
\in H_2(M,M^{(0)}).
\]
Thus the homology class under consideration is now represented by a \emph{genuine} $2$-cycle in $M$.
Finally, we observe that the natural map
\[
H_2(M) \cong E^{\infty}_{1,1}
\overset{\cong}{\longrightarrow} E^2_{1,1} \subset H_2(M,M^{(0)})
\]
coincides with the inclusion homomorphism.
This completes the proof.
\end{proof}

By Lemma~\ref{lem:aZa}, it is enough to compute $i_*([\mathcal{Z}_{\alpha}])$.
Since $\tilde{a}$ is a $1$-cycle in $\Sigma_g = \partial V_g$, the $2$-chain $\Phi_{\sharp}(I \times \tilde{a})$ lies in $\partial M_{\varphi}$.
Hence
\[
\mathcal{Z}_{\alpha} = - e^0 \times \sigma_{\varphi,\alpha}
\in Z_2(M_{\varphi},\partial M_{\varphi}).
\]
This shows that $i_*([\mathcal{Z}_{\alpha}])$ is represented by the relative $2$-cycle $-e^0 \times \sigma_{\varphi,\alpha}$.
This completes the proof of Proposition~\ref{prop:incl}. 

\subsection{Proof of Theorem~\ref{thm:main1}} \label{subsec:intform}

We describe the intersection form of $M_{\varphi}$ and prove Theorem~\ref{thm:main1}.

First we claim that the second homology group $H_2(M_{\varphi})$ is naturally isomorphic to $U_{\varphi}^{\Z}:= \Ker(S-I_g) \subset \Z^g$.
In fact, by Lemma~\ref{lem:H2abs} we have $H_2(M_{\varphi}) \cong H_1(V_g)^{\pi_1(S^1)}$, and the action of $\varphi$ on $H_1(V_g) \cong \mathbb{Z}^g$ is given by the matrix $S$.
Thus the claim follows.

We next claim that under the isomorphism $H_2(M_{\varphi}) \cong U_{\varphi}^{\Z}$, the intersection form on $H_2(M_{\varphi})$ is transferred to the bilinear form $\langle \ ,\ \rangle_{\varphi}$.
Since $\phi_g^V(\varphi) = \Sign M_{\varphi}$, this will complete the proof of Theorem~\ref{thm:main1}.
The proof of this claim consists of two steps.

\emph{Step 1.}
We give a description of the bilinear form on $H_1(V_g)^{\pi_1(S^1)}$ that is obtained by transferring the intersection form on $H_2(M_{\varphi})$.
Let
$
\langle \cdot, \cdot \rangle_V \colon
H_2(V_g,\Sigma_g) \times H_1(V_g) \to \mathbb{Z}
$
be the intersection product of the compact oriented $3$-manifold $V_g$.
We have
\begin{equation}
\label{eq:Db}
\langle D_i, \beta_j \rangle_V = \delta_{ij} \quad
\text{for any $i,j \in \{ 1,\ldots,g\}$}.
\end{equation}
Let
\begin{equation}
\label{eq:H_0H_1}
H_0(S^1; \HH_2(V_g,\Sigma_g)) \times H_1(S^1; \HH_1(V_g)) \longrightarrow \Z
\end{equation}
be the intersection product of $H_0(S^1; \HH_2(V_g,\Sigma_g))$ and  $H_1(S^1; \HH_1(V_g))$ followed by the contraction of the coefficients by the form $\langle \cdot, \cdot \rangle_V$.
Under the isomorphisms in Lemmas \ref{lem:H2abs} and \ref{lem:H2rel}, this is equivalent to the intersection product $H_2(M_{\varphi})\times H_2(M_{\varphi},\partial M_{\varphi})\to\mathbb{Z}$.
By composing \eqref{eq:H_0H_1} and the homomorphism
\begin{align*}
H_1(V_g)^{\pi_1(S^1)} \times H_1(V_g)^{\pi_1(S^1)}
& \overset{d\otimes \id}{\longrightarrow}
H_2(V_g,\Sigma_g)_{\pi_1(S^1)} \times H_1(V_g)^{\pi_1(S^1)} \\
& \hspace{0.4em} \cong \hspace{0.4em} H_0(S^1; \HH_2(V_g,\Sigma_g)) \times H_1(S^1; \HH_1(V_g)),
\end{align*}
we obtain a bilinear form on $H_1(V_g)^{\pi_1(S^1)}$.
Proposition \ref{prop:incl} implies that this is equivalent to the intersection form on $H_2(M_{\varphi})$.

\emph{Step 2.}
We prove that the bilinear form on $H_1(V_g)^{\pi_1(S^1)}$ described in the previous paragraph is equivalent to $\langle \ ,\ \rangle_{\varphi}$ under the identification $H_1(V_g)^{\pi_1(S^1)} \cong U_{\varphi}^{\Z}$.
Let $x = (x_1,\ldots,x_g)$, $y=(y_1,\ldots,y_g) \in U_{\varphi}^{\Z} \subset \Z^g$.
We regard $x$ as an element of $H_1(V_g)^{\pi_1(S^1)}$.
Then, we can take $\tilde{x} = \sum_{i=1}^g x_i \beta_i \in H_1(\Sigma_g)$ as a lift of $x$ which we need to compute $d(x)$.
Thus we have
\[
\varphi_* (\tilde{x}) -\tilde{x} =
(\alpha_1,\ldots,\alpha_g)\, Q\, {}^t (x_1,\ldots,x_g)
= (x_1,\ldots,x_g)\, {}^t  Q\, {}^t (\alpha_1,\ldots,\alpha_g),
\]
and hence
$
d(x) = (x_1,\ldots,x_g)\, {}^t  Q\, {}^t (D_1,\ldots,D_g).
$
Therefore, the pairing of $x$ and $y$ by the bilinear form on $H_1(V_g)^{\pi_1(S^1)}$ described above is equal to
\[
\big\langle
(x_1,\ldots,x_g)\, {}^t  Q\, {}^t (D_1,\ldots,D_g),
(\beta_1,\ldots,\beta_g)\, {}^t (y_1,\ldots,y_g)
\big\rangle_V
= {}^t x\,  {}^t Q\, y
= \langle x ,y \rangle_{\varphi}.
\]
Here we used the equality \eqref{eq:Db}.
This completes the proof of Theorem~\ref{thm:main1}.

\begin{remark}\label{rem:GilMas}
There is a $2$-cocycle $m_{\lambda}$ on $\Sp(2g;\mathbb{Z})$ constructed by Turaev~\cite{Turaev} which satisfies $[m_{\lambda}]=-[\tau_g]\in H^2(\Sp(2g;\Z))$,
and
Walker, in page 124 of his note\footnote{K.~Walker (1991). \emph{On Witten's $3$-manifold invariants}, Preliminary Version [online]. Website https://canyon23.net/math/1991TQFTNotes.pdf [accessed 1 May 2020].},
constructed a (unique) cobounding function
$j\colon \mcg\to\Q$ of the sum $\rho^*\tau_g+\rho^*m_{\lambda}$ of $2$-cocycles.
The $2$-cocycle $m_{\lambda}$ and the function $j$ depend on the choice of a lagrangian $\lambda \subset H_1(\Sigma_g;\Q)$.
If we choose a suitable lagrangian $\lambda$,
the restriction of $j$ to $\hb$ is known to be a cobounding function of $\rho^*\tau_g$,
and coincides with our function $\phi_g^V$.
Gilmer and Masbaum~\cite[Proposition~6.9]{GilMas} described $j$ explicitly in a way which is similar to but different from ours. 
\end{remark}

\begin{remark}
\label{rem:symmetric}
Since $Sy = y$ for any $y\in U_{\varphi}$, we have $\langle x, y\rangle_{\varphi}= {}^t x\,  {}^t Q S\, y$ for any $x,y \in U_{\varphi}$.
Since ${}^t Q S$ is symmetric by \eqref{eq:PQS}, this gives a purely algebraic explanation for the symmetric property of the form $\langle \ , \ \rangle_{\varphi}$ on $U_{\varphi}$.
\end{remark}

\begin{remark}
\label{rem:phiur}
By Theorem~\ref{thm:main1}, one can regard $\phi^V_g$ as a $1$-cochain on $\urSp(2g;\Z)$.
For $g\ge 3$, it is the unique $1$-cochain which cobounds $\tau_g$ on $\urSp(2g;\Z)$ since $H^1(\urSp(2g;\Z))=0$; see \cite[Corollary 4.4]{Sakasai}.
\end{remark}

\section{Evaluation of Meyer functions}\label{section:Meyer function}

\subsection{The Meyer function on the hyperelliptic mapping class group} \label{subsec:phiH}

There is a unique $1$-cochain $\phi_g^{\mathcal{H}} \colon \mathcal{H}(\Sigma_g) \to \Q$ such that for any $\varphi_1,\varphi_2 \in \mathcal{H}(\Sigma_g)$,
\begin{equation}
\label{eq:phiH}
\phi_g^{\mathcal{H}}(\varphi_1) + \phi_g^{\mathcal{H}}(\varphi_2) 
-\phi_g^{\mathcal{H}}(\varphi_1\varphi_2) = \tau_g (\rho(\varphi_1), \rho(\varphi_2)).
\end{equation}
The $1$-cochain $\phi_g^{\mathcal{H}}$ is called the \emph{Meyer function on the hyperelliptic mapping class group of genus $g$};
see \cite{Endo00, Morifuji03}.

Recall the element $s_1 = t_2 t_3 t_1 t_2 \in \mathcal{H}(V_g)\subset \mathcal{H}(\Sigma_g)$ which was defined in Section~\ref{subsec:hhb}.
\begin{lemma}
\label{lem:phiHs}
$\phi_g^{\mathcal{H}}(s_1) = (2g + 3)/(2g + 1)$.
\end{lemma}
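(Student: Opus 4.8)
The plan is to compute $\phi_g^{\mathcal{H}}(s_1)$ by relating it to the signature of an explicit $4$-manifold fibered over a surface, using Meyer's interpretation of the cocycle $\tau_g$ (Proposition~\ref{prop:Meyer}). Recall $s_1 = t_2 t_3 t_1 t_2$, where $t_i$ is the right-handed Dehn twist along the curve $C_i$ of Figure~\ref{fig:curves}. Since the three curves $C_1,C_2,C_3$ cobound a pair of pants $P$ embedded in $\Sigma_g$ (this is essentially how the hyperelliptic structure and the curves are set up), the lantern-type or chain relations among the $t_i$ on a subsurface, together with the defining cocycle identity \eqref{eq:phiH}, reduce the computation of $\phi_g^{\mathcal{H}}(s_1)$ to evaluations of $\phi_g^{\mathcal{H}}$ on the individual twists $t_1,t_2,t_3$ and on short products of them. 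The first step I would carry out is to expand $\phi_g^{\mathcal{H}}(s_1)$ via repeated application of \eqref{eq:phiH}: writing $s_1 = (t_2 t_3)(t_1 t_2)$ and then breaking each factor further, one gets $\phi_g^{\mathcal{H}}(s_1)$ as an alternating sum of $\phi_g^{\mathcal{H}}$ on $t_1,t_2,t_3$, on the pairwise products, and on $t_1 t_2$ again, plus a sum of four explicit values $\tau_g(\rho(\cdot),\rho(\cdot))$.

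The second step is to evaluate each $\tau_g(\rho(a),\rho(b))$ appearing in that expansion. Here $\rho(t_i)$ is a transvection (a single symplectic transvection along the homology class of $C_i$, or a product of two such if $C_i$ is separating in a suitable sense — the $C_i$ in Figure~\ref{fig:curves} should be chosen so that $[C_i]$ is primitive), so the linear-algebra definition of $\tau_g$ in Section~\ref{subse:taug} can be applied directly: one identifies the subspace $V_{A,B}$ and computes the signature of the small symmetric form $\langle\cdot,\cdot\rangle_{A,B}$ on it. Because transvections fix a large subspace, $V_{A,B}$ is low-dimensional and these signatures are $0,\pm1$, computable by hand. The third step is to evaluate $\phi_g^{\mathcal{H}}$ on the individual Dehn twists $t_i$. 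For this I would invoke the known values of the Meyer function on the hyperelliptic mapping class group from Endo~\cite{Endo00} and Morifuji~\cite{Morifuji03}: on a hyperelliptic mapping class group the Dehn twist along a curve $C$ with $[C]$ primitive has $\phi_g^{\mathcal{H}}(t_C) = -\,(\text{explicit rational depending only on whether }C\text{ is nonseparating, and on }g)$; for the three curves $C_1,C_2,C_3$ symmetric under $\iota$ these values are equal, say $\phi_g^{\mathcal{H}}(t_i) = c(g)$ for $i=1,2,3$, with $c(g)$ a standard formula like $-(3g+ \text{const})/(2g+1)$ up to the normalization conventions. Substituting $c(g)$ and the computed $\tau_g$-values into the alternating sum from Step~1 yields a rational function of $g$, which should simplify to $(2g+3)/(2g+1)$.

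The main obstacle I anticipate is bookkeeping and sign/normalization control: the paper works with right-handed twists whereas~\cite{HiroseKin} uses left-handed ones, and Endo's and Morifuji's conventions for the Meyer function differ by signs and sometimes by a factor; getting the orientation of $P$, the order of the loops $\ell_1\ell_2\ell_3 = 1$, and the $-[\tau_g]$ vs.\ $[\tau_g]$ issue all consistent is where an error is most likely to creep in. A robust alternative, which I would pursue in parallel as a check, is topological: realize $s_1$ as the monodromy of a Lefschetz-type fibration over a disk (four vanishing cycles $C_2,C_3,C_1,C_2$) capped off, compute $\operatorname{Sign}$ of the resulting $4$-manifold directly from the intersection form of its plumbing/handle description, and compare with the cocycle identity $\phi_g^{\mathcal{H}}(s_1) = \text{(local signature contributions)}$. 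Matching the two computations both pins down the conventions and confirms the value $(2g+3)/(2g+1)$. Since $s_1 \in \mathcal{H}(V_g)$, one can moreover cross-check consistency with $\phi_g^V(s_1)$ via Theorem~\ref{thm:main1} once $\rho(s_1) = \begin{pmatrix} P & Q \\ O_g & S \end{pmatrix}$ is written out, though that comparison logically belongs to the later Lemma~\ref{lem:phiVs} rather than to the present proof.
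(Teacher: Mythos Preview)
Your approach is essentially the paper's: iterate the coboundary identity \eqref{eq:phiH} to reduce $\phi_g^{\mathcal{H}}(s_1)$ to the values $\phi_g^{\mathcal{H}}(t_i)=(g+1)/(2g+1)$ from \cite{Endo00,Morifuji03} plus explicit $\tau_g$-terms, then compute those $\tau_g$-values by hand. Two small corrections to your sketch: after full expansion only $\phi_g^{\mathcal{H}}$ on the four individual twists remains (no pairwise products and no lantern/chain relations are needed), and there are exactly three $\tau_g$-terms---in the paper's bracketing $\tau_g(T_1,T_2)=0$, $\tau_g(T_3,T_1T_2)=0$, $\tau_g(T_2,T_3T_1T_2)=1$---giving $4\cdot\frac{g+1}{2g+1}-1=\frac{2g+3}{2g+1}$.
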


\begin{proof}
Set $T_i = \rho(t_i)$ for every $i\in \{1,2,3\}$.
Using \eqref{eq:phiH}, we have
\begin{align*}
\phi_g^{\mathcal{H}}(s_1) &= \phi_g^{\mathcal{H}}(t_2)
+ \phi_g^{\mathcal{H}}(t_3) + \phi_g^{\mathcal{H}}(t_1) + \phi_g^{\mathcal{H}}(t_2) \\
& \hspace{1em} - \tau_g(T_1,T_2) - \tau_g(T_3,T_1T_2) - \tau_g(T_2,T_3T_1T_2).
\end{align*}
As was shown in \cite[Lemma 3.3]{Endo00} and \cite[Proposition 1.4]{Morifuji03},
we have $\phi_g^{\mathcal{H}}(t_i) = (g + 1)/(2g + 1)$ for all $i \in \{1,2,3\}$.
Also, by a direct computation we obtain $\tau_g(T_1,T_2) = 0$, $\tau_g(T_3, T_1T_2) = 0$, and $\tau_g(T_2,T_3T_1T_2) = 1$.
The result follows from these equalities.
\end{proof}

\subsection{The Meyer function on the handlebody group} \label{subsec:phiV}

Recall from the introduction that we defined $\phi_g^V\colon\hb\to\mathbb{Z}$ by $\varphi\mapsto\Sign M_{\varphi}$,
where $M_{\varphi}$ is the mapping torus of $\varphi$.
\begin{lemma}\label{lem:cobounding}
The function $\phi^V_g\colon\hb\to\mathbb{Z}$ cobounds the cocycle $\rho^*\tau_g$ in the handlebody group $\hb$.
If $g\ge 3$, $\phi^V_g$ is the unique cobounding function of $\rho^*\tau_g$. 
\end{lemma}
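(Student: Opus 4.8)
The goal splits into two parts: (i) showing that the coboundary of $\phi_g^V$ equals $\rho^*\tau_g$ on $\hb$, and (ii) showing uniqueness when $g\ge 3$. For (i), the natural approach is to realize both sides as signatures of $4$-manifolds and compare via Novikov additivity. Concretely, given $\varphi_1,\varphi_2\in\hb$, consider the oriented $\Sigma_g$-bundle $E(\varphi_1,\varphi_2)\to P$ over the pair of pants from Section~\ref{subse:taug}. Since $\varphi_1,\varphi_2\in\hb$ extend over $V_g$, this bundle extends to an oriented $V_g$-bundle $\widetilde E(\varphi_1,\varphi_2)\to P$ whose fiberwise boundary is exactly $E(\varphi_1,\varphi_2)$; its total space is a compact oriented $5$-manifold, so it does not directly contribute a signature. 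Instead, the right object is the following: form the $V_g$-bundle over $P$ and glue to each of the three boundary circles $C_i$ the mapping torus (with reversed orientation as appropriate) $M_{\varphi_1}$, $M_{\varphi_2}$, $M_{(\varphi_1\varphi_2)^{-1}}$ along their $\Sigma_g$-boundaries — wait, more carefully: glue along the restriction to the boundary to obtain a closed or bounded $4$-manifold $W$ whose signature is computed two ways.

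\textbf{First step.} I would set up the glued manifold precisely. Let $X$ be the total space of the $V_g$-bundle over $P$; it is a compact oriented $5$-manifold with boundary $\partial X = E(\varphi_1,\varphi_2)\cup(\text{fiberwise boundary over }P)$. This is not quite what we want — the cleaner construction is the standard one used by Meyer: take the $\Sigma_g$-bundle $E(\varphi_1,\varphi_2)\to P$ and cap off the three boundary $\Sigma_g$-bundles over $C_1,C_2,C_3$ (which are the mapping tori of $\varphi_1$, $\varphi_2$, $(\varphi_1\varphi_2)^{-1}$) — but these caps are $3$-manifold bundles, i.e.\ handlebody mapping tori $M_{\varphi_1}$, $M_{\varphi_2}$, $M_{(\varphi_1\varphi_2)^{-1}}$, each a $4$-manifold with $\Sigma_g$-bundle boundary. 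Gluing $E(\varphi_1,\varphi_2)$ to $M_{\varphi_1}\sqcup M_{\varphi_2}\sqcup M_{(\varphi_1\varphi_2)^{-1}}$ along the common boundary $\Sigma_g$-bundles over $C_1\sqcup C_2\sqcup C_3$ yields a closed oriented $4$-manifold $W$. By Novikov additivity,
\[
\Sign W = \Sign E(\varphi_1,\varphi_2) + \Sign M_{\varphi_1} + \Sign M_{\varphi_2} + \Sign M_{(\varphi_1\varphi_2)^{-1}},
\]
with appropriate orientation signs (the $C_i$ are negatively oriented boundary components, so one of the mapping tori enters with a sign, and $\Sign M_{\varphi^{-1}} = -\Sign M_\varphi$ since reversing the orientation of $S^1$ reverses that of the mapping torus).

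\textbf{Second step.} I would then argue $\Sign W = 0$. The point is that $W$ is itself the boundary of a compact oriented $5$-manifold: glue the $V_g$-bundle $X$ over $P$ to the handlebody mapping tori viewed as $V_g$-bundles over the three boundary circles — or more simply, $W$ bounds because it is an oriented $3$-manifold bundle ($V_g$-bundle) over a closed surface (the double of $P$, or rather $P$ with the three boundary circles capped by the mapping-torus bases is not closed; instead one uses that $W$ fibers over a closed surface with $3$-manifold fiber, hence is an oriented boundary, hence has zero signature by cobordism invariance). Concretely: $E(\varphi_1,\varphi_2)=\partial X$ with $X\to P$ a $V_g$-bundle, and each $M_{\varphi_i}$ is the restriction of a $V_g$-bundle over a disk $\Delta_i$ glued to $C_i$; assembling $P\cup\Delta_1\cup\Delta_2\cup\Delta_3 = S^2$ gives a $V_g$-bundle over $S^2$ whose total space is a compact oriented $5$-manifold with boundary $W$, so $[W]=0$ in $\Omega_4^{SO}$ and $\Sign W = 0$. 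Combining with Proposition~\ref{prop:Meyer} ($\Sign E(\varphi_1,\varphi_2)=\tau_g(\rho(\varphi_1),\rho(\varphi_2))$) and the definition $\phi_g^V(\varphi)=\Sign M_\varphi$ gives exactly the coboundary relation $\tau_g(\rho(\varphi_1),\rho(\varphi_2)) = \phi_g^V(\varphi_1)+\phi_g^V(\varphi_2)-\phi_g^V(\varphi_1\varphi_2)$, keeping careful track of orientation conventions.

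\textbf{Third step and main obstacle.} For uniqueness when $g\ge 3$: any two cobounding functions of the same cocycle differ by a homomorphism $\hb\to\Q$, i.e.\ by an element of $\Hom(H_1(\hb),\Q)$. By the cited results (\cite[Theorem~20]{Wajnryb}, \cite[Remark~3.5]{IshidaSato}), $H_1(\hb)$ is finite for $g\ge 3$, so $\Hom(H_1(\hb),\Q)=0$ and the cobounding function is unique. This part is immediate given the cited input. The main obstacle is the careful bookkeeping in the first two steps: getting the orientation signs right in the Novikov additivity formula (which boundary components are negatively oriented, how this interacts with $\Sign M_{\varphi^{-1}}=-\Sign M_\varphi$), and verifying cleanly that the assembled object is genuinely an oriented boundary so that its signature vanishes — the topology is standard but the sign conventions must match those implicit in the definition of $\tau_g$ and in Proposition~\ref{prop:Meyer}. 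An alternative that avoids some of this is to invoke the algebraic characterization: $\phi_g^V$ as computed in Theorem~\ref{thm:main1} can be checked directly against the formula for $\tau_g$ in Section~\ref{subse:taug} by a matrix computation in $\urSp(2g;\Z)$, but the geometric Novikov-additivity argument is more conceptual and is the route I would take.
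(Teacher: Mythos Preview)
Your overall strategy---Novikov additivity applied to a closed $4$-manifold built from $E(\varphi_1,\varphi_2)$ and the three handlebody mapping tori, together with the observation that this $4$-manifold bounds a $5$-manifold---is exactly the paper's approach. However, your execution in Step~2 contains a genuine error, and you overlook the simpler argument you already had in hand.

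The error is the claim that ``each $M_{\varphi_i}$ is the restriction of a $V_g$-bundle over a disk $\Delta_i$ glued to $C_i$.'' This is false: a $V_g$-bundle over $C_i$ with monodromy $\varphi_i$ extends over a disk if and only if $\varphi_i$ is isotopic to the identity in $\hb$. So your assembled $V_g$-bundle over $S^2$ does not exist in general, and that construction of a bounding $5$-manifold fails.

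The fix is that the object you introduced and then abandoned in Step~1---the total space $X$ of the $V_g$-bundle over $P$---already does the job. You miscomputed its boundary: $\partial X$ is not ``$E(\varphi_1,\varphi_2)\cup(\text{fiberwise boundary over }P)$'' (those are the same thing). Rather, since both the base $P$ and the fiber $V_g$ have boundary, $\partial X$ is the union of the fiberwise boundary (the $\Sigma_g$-bundle over $P$, i.e.\ $E(\varphi_1,\varphi_2)$) and the restriction over $\partial P$ (the $V_g$-bundle over $C_1\sqcup C_2\sqcup C_3$, i.e.\ the three mapping tori), glued along their common corner. In other words $\partial X$ is precisely your $W$, and $\Sign W=0$ follows immediately. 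This is exactly what the paper does: it writes $\partial W(\varphi,\psi)=E(\varphi,\psi)\cup(M_{\varphi^{-1}}\sqcup M_{\psi^{-1}}\sqcup M_{\varphi\psi})$ and applies Novikov additivity directly. Your uniqueness argument (Step~3) matches the paper's.
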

\begin{proof}
The uniqueness follows from the fact that $H_1(\hb)$ is torsion when $g\ge3$.

For given two mapping classes $\varphi, \psi \in \hb$, there is an oriented $V_g$-bundle $W(\varphi,\psi) \to P$ such that the monodromy along $\ell_1$, $\ell_2$ and $\ell_3$ are $\varphi$, $\psi$ and $(\varphi\psi)^{-1}$, respectively.
The boundary of $W(\varphi,\psi)$ is written as 
\[
\partial W(\varphi, \psi)=E(\varphi,\psi)\cup(M_{\varphi^{-1}}\sqcup M_{\psi^{-1}}\sqcup M_{\varphi\psi}).
\]
Note that $M_{\varphi^{-1}}$ is diffeomorphic to $-M_{\varphi}$ under an orientation-preserving diffeomorphism,
where $-M_{\varphi}$ denotes the mapping torus $M_{\varphi}$ with orientation reversed.
Since the signature of $\partial W(\varphi,\psi)$ is zero, Novikov additivity implies that
\[
\Sign E(\varphi,\psi)-\Sign M_{\varphi}-\Sign M_{\psi}+\Sign M_{\varphi\psi}=0.
\]
This shows that $\phi^V_g$ is a cobounding function of $\rho^*\tau_g$ restricted to $\hb$.
\end{proof}

Since $\dim V_{A,B}\le 4g$ for any $A,B\in\Sp(2g;\Z)$,
the signature cocycle $\tau_g$ is a bounded 2-cocycle.
Therefore, it represents a class in the second bounded cohomology group $H_b^2(\mcg)$.
The image of $[\tau_g]$ under the natural homomorphism
$H_b^2(\mcg;\mathbb{Q})\to H_b^2(\hyp;\mathbb{Q})$ is non-trivial since the Meyer function $\phi_g^{\mathcal{H}}$ is unbounded.
In contrast, we have:
\begin{proposition}
\label{prop:bddvan}
Under the natural homomorphism
$H_b^2(\mcg;\mathbb{Q})\to H_b^2(\hb;\mathbb{Q})$,
the image of the cohomology class $[\tau_g]$ vanishes.
\end{proposition}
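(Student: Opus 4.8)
The plan is to exhibit the restriction of $\tau_g$ to $\hb$ as the coboundary of a \emph{bounded} $1$-cochain with rational coefficients; this forces the corresponding class in $H_b^2(\hb;\Q)$ to vanish. The candidate cochain is $\phi_g^V\colon\hb\to\Z$ itself. First I would recall that the image of $[\tau_g]$ under $H_b^2(\mcg;\Q)\to H_b^2(\hb;\Q)$ is, by definition, represented by the pullback of $\tau_g$ along the composite $\hb\hookrightarrow\mcg\overset{\rho}{\to}\Sp(2g;\Z)$, that is, by $\rho^*\tau_g$ restricted to $\hb\times\hb$. By Lemma~\ref{lem:cobounding} we already have $\delta\phi_g^V=\rho^*\tau_g$ on $\hb$, so it remains only to check that $\phi_g^V$ is a bounded function.

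This is where Theorem~\ref{thm:main1} does all the work: for $\varphi\in\hb$ the value $\phi_g^V(\varphi)$ is the signature of the symmetric bilinear form $\langle\ ,\ \rangle_{\varphi}$ on the $\Q$-vector space $U_{\varphi}=\Ker(S-I_g)\subseteq\Q^g$. Since $\dim_{\Q}U_{\varphi}\le g$, the signature of any symmetric bilinear form on $U_{\varphi}$ lies in $[-g,g]$, so $|\phi_g^V(\varphi)|\le g=\rank H_1(V_g)$ for every $\varphi\in\hb$. Hence $\phi_g^V$ is bounded, and $\delta\phi_g^V$ is a coboundary in the complex of bounded $\Q$-valued (equivalently $\R$-valued) cochains on $\hb$.

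Therefore $[\rho^*\tau_g]=0$ in $H_b^2(\hb;\Q)$, and since this class is precisely the image of $[\tau_g]$ under the restriction homomorphism induced by the inclusion $\hb\hookrightarrow\mcg$, the proposition follows. There is no serious obstacle here once Theorem~\ref{thm:main1} is available; the only point requiring a little care is the bookkeeping identifying the image of $[\tau_g]$ with the class of $\rho^*\tau_g$ on $\hb$, and observing that the integer-valued but bounded cochain $\phi_g^V$ is a legitimate bounded cochain over $\Q$ whose coboundary is the cocycle in question.
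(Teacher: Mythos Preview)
Your proof is correct and follows the same strategy as the paper: show that $\phi_g^V$ is a bounded cochain cobounding $\rho^*\tau_g$ on $\hb$, whence the bounded class vanishes. The only cosmetic difference is that the paper bounds $|\phi_g^V(\varphi)|\le g$ directly from the definition $\phi_g^V(\varphi)=\Sign M_\varphi$ together with $\rank H_2(M_\varphi)\le g$ (Lemma~\ref{lem:H2abs}), whereas you route the same bound through Theorem~\ref{thm:main1}; either way yields the identical estimate.
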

\begin{proof}
The restriction of the signature cocycle $\tau_g$ to $\hb$ is cobounded by the function $\phi_g^V$,
and $\phi_g^V$ is a bounded function since the rank of $H_2(M_{\varphi})$ is at most $g$.
\end{proof}

\subsection{Computation of the Meyer function on the handlebody group} \label{subsec:comphiV}

Theorem~\ref{thm:main1} shows that the bilinear form $\langle \ ,\ \rangle_{\varphi}$ on $U_{\varphi}$, whose signature coincides with $\phi_g^V(\varphi)$, can be computed from the homological monodromy $\rho(\varphi)\in \urSp(2g;\Z)$.
In more detail, if $\rho(\varphi) = \begin{pmatrix} P & Q \\ O_g & S \end{pmatrix}$, then $U_{\varphi} = \Ker(S-I_g) \subset \Q^g$ and $\langle x, y \rangle_{\varphi} = {}^t x\,  {}^t Q\, y$ for $x,y \in U_{\varphi}$.

The $1$-cochain $\phi_g^V$, regarded as the one defined on $\urSp(2g;\Z)$, is \emph{stable} with respect to $g$ in the following sense.
For every non-negative integer $g\ge 0$, there is a natural embedding $\iota\colon \urSp(2g;\Z) \hookrightarrow \urSp(2(g+1);\Z)$;
\[
A= \begin{pmatrix} P & Q \\ O_g & S \end{pmatrix}
\mapsto \iota(A) =
\begin{pmatrix} \tilde{P} & \tilde{Q} \\ O_{g+1} & \tilde{S} \end{pmatrix},
\]
where
\[
\tilde{P} = \begin{pmatrix} P & 0 \\ 0 & 1 \end{pmatrix}, \quad
\tilde{Q} = \begin{pmatrix} Q & 0 \\ 0 & 0 \end{pmatrix}, \quad
\tilde{S} = \begin{pmatrix} S & 0 \\ 0 & 1 \end{pmatrix}.
\]
Then $\phi_{g+1}^V(\iota(A)) = \phi_g^V(A)$ for any $A\in \urSp(2g;\Z)$.

\begin{lemma}
\label{lem:phiVt}
For any positive integer $m$, we have $\phi_g^V(t_1^m) = 1$.
\end{lemma}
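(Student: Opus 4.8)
The plan is to compute the homological monodromy $\rho(t_1^m)\in\urSp(2g;\Z)$ explicitly and then read off the signature from Theorem~\ref{thm:main1}.

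First I would pin down $\rho(t_1)$. Since $t_1\in\hb$ (the curve $C_1$ bounds a disk in $V_g$), we have $\rho(t_1)=\begin{pmatrix}P&Q\\O_g&S\end{pmatrix}$ with $P,Q,S$ satisfying \eqref{eq:PQS}. On the other hand $t_1$ is a Dehn twist, so $\rho(t_1)=I_{2g}+N$, where $N$ is the transvection $x\mapsto\pm\langle x,[C_1]\rangle[C_1]$ (the sign being fixed by the orientation convention for the twist); its image is generated by $[C_1]$, so $\rank N\le1$ and $N^2=0$. Comparing the two descriptions: if $S\neq I_g$, then $N$ has a nonzero row supported in the last $g$ coordinates, and $\rank N\le1$ forces every row of $N$ to be supported there too, hence $P=I_g$, hence $S=I_g$ by \eqref{eq:PQS} --- a contradiction. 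So $P=S=I_g$, and $N=\begin{pmatrix}O_g&Q\\O_g&O_g\end{pmatrix}$ with $Q={}^t\!Q$ (again by \eqref{eq:PQS}) and $Q\neq O_g$ (as $C_1$ is nonseparating). Since $\Im N=\Z[C_1]$ lies in $\langle\alpha_1,\ldots,\alpha_g\rangle$ and $[C_1]$ is primitive, we may write $[C_1]=\sum_i v_i\alpha_i$ with $v=(v_1,\ldots,v_g)\in\Z^g$ primitive; evaluating the transvection formula on the $\beta_j$ then identifies $Q=\pm\,v\,{}^t\!v$, and I would check from the conventions in force (right-handed twist, the chosen symplectic basis, the orientation of $M_{t_1}$) that the sign is $+$, so that $Q$ is positive semi-definite. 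In the concrete picture of Figure~\ref{fig:curves} one may just take $[C_1]=\alpha_1$, in which case $Q$ is the matrix with a single $1$ in position $(1,1)$.

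Because $\rho(t_1)=I_{2g}+N$ with $N^2=0$, we get $\rho(t_1^m)=\begin{pmatrix}I_g&mQ\\O_g&I_g\end{pmatrix}$ for every $m\ge1$. Now Theorem~\ref{thm:main1} applies directly: here $S-I_g=O_g$, so $U_{t_1^m}=\Q^g$, and $\langle x,y\rangle_{t_1^m}={}^t\!x\,{}^t\!(mQ)\,y=m\,{}^t\!x\,Q\,y=m\,({}^t\!v\,x)({}^t\!v\,y)$. The associated quadratic form $x\mapsto m\,({}^t\!v\,x)^2$ is positive semi-definite of rank exactly one (as $m>0$ and $v\neq0$), hence of signature $1$, and therefore $\phi_g^V(t_1^m)=1$.

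The one genuinely delicate point is this sign: one must confirm that the conventions make the rank-one form $\langle\ ,\ \rangle_{t_1^m}$ positive, not negative, semi-definite; everything else is routine matrix algebra. A convenient cross-check is to compute the genus-one case $\phi_1^V(t_1^m)=\Sign\begin{pmatrix}1&m\\0&1\end{pmatrix}=1$ directly, and then to invoke the stability relation $\phi_{g+1}^V(\iota(A))=\phi_g^V(A)$ recorded just before the statement.
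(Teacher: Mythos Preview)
Your argument is correct and follows essentially the same approach as the paper: compute the homological monodromy $\rho(t_1^m)$ and apply Theorem~\ref{thm:main1}. The paper simply writes down the action of $t_1^m$ on the basis (namely $\beta_1\mapsto m\alpha_1+\beta_1$ and all other generators fixed), then uses the stability relation to reduce to $g=1$ and reads off the signature of the $1\times 1$ matrix $(m)$; you instead derive the shape of $\rho(t_1)$ abstractly and compute the rank-one form in arbitrary genus, but this is only a cosmetic difference.
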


\begin{proof}
Since the action of $\rho(t_1^m)$ on $H_1(\Sigma_g)$ is given by
\[
\rho(t_1)\colon
\alpha_i \mapsto \alpha_i \quad (i = 1,\ldots,g),
\quad \quad
\beta_1 \mapsto m \alpha_1 + \beta_1,
\quad \quad
\beta_i \mapsto \beta_i \quad (i = 2,\ldots,g),
\]
we may assume that $g=1$.
Then $\rho(t_1^m) = \begin{pmatrix} 1 & m \\ 0 & 1 \end{pmatrix}$, and $\Ker(S-I_1) = \Z$ on which the pairing is given by the $1\times 1$ matrix $\begin{pmatrix} m \end{pmatrix}$. 
Hence $\phi_g^V(t_1^m)=1$, as required.
\end{proof}

\begin{lemma}
\label{lem:phiVs}
$\phi^V_g(s_1) = 1$.
\end{lemma}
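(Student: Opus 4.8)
The plan is to compute the homological monodromy $\rho(s_1)$ explicitly, extract the block $Q$ and the matrix $S$ from it, and then apply Theorem~\ref{thm:main1} to read off the signature of $\langle\ ,\ \rangle_{s_1}$ directly. Since $s_1 = t_2t_3t_1t_2$ is built from the three Dehn twists $t_1,t_2,t_3$ along the curves $C_1,C_2,C_3$ of Figure~\ref{fig:curves}, the first step is to identify the homology classes of these curves in terms of the fixed symplectic basis $\alpha_1,\dots,\alpha_g,\beta_1,\dots,\beta_g$. From the picture, $C_1$ bounds a disk in $V_g$ (so $[C_1]$ is an $\alpha$-type class, likely $\alpha_1$ up to sign), while $C_2$ and $C_3$ will involve the other handles; the key point is that all three curves live in the genus-$1$ (or genus-$2$) ``visible'' part, so by the stability property $\phi_{g+1}^V(\iota(A)) = \phi_g^V(A)$ stated just before Lemma~\ref{lem:phiVt}, it suffices to do the computation for small $g$, most likely $g=1$ exactly as in the proof of Lemma~\ref{lem:phiVt}.

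**Next I would** use the transvection formula: the Dehn twist $t_C$ acts on homology by $x \mapsto x + \langle x, [C]\rangle [C]$ (with the appropriate sign convention for right-handed twists). Composing the four transvections for $[C_2], [C_3], [C_1], [C_2]$ in the correct order gives an explicit matrix $\rho(s_1) = \begin{pmatrix} P & Q \\ O_g & S \end{pmatrix} \in \urSp(2g;\Z)$. I then compute $U_{s_1} = \Ker(S - I_g) \subset \Q^g$, which records which $\beta$-directions are fixed by $s_1$ on $H_1(V_g)$, and restrict the bilinear form $\langle x,y\rangle_{s_1} = {}^t x\,{}^tQ\,y$ to this subspace. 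The claim $\phi_g^V(s_1) = 1$ then amounts to checking that this restricted form is (equivalent to) a nonzero positive-definite rank-one form — concretely, that $\dim U_{s_1} = 1$ and the single diagonal entry of ${}^tQ$ there is positive, just as happened with the $1\times 1$ matrix $(m)$ in Lemma~\ref{lem:phiVt}. Alternatively, once the monodromy matrix is in hand one could cross-check against Proposition~\ref{prop:Meyer}-style cocycle computations, but the direct route through Theorem~\ref{thm:main1} should be shortest.

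**The main obstacle** I anticipate is purely bookkeeping: getting the homology classes $[C_1], [C_2], [C_3]$ right with respect to the specific basis arising from the handlebody structure (recall $\alpha_i = \partial_*(D_i)$ and $\beta_i$ is the dual handle core), and tracking signs through the composition of right-handed twists in the order $t_2t_3t_1t_2$. A plausible subtlety is that $s_1$ may not lie in the stably-$g=1$ part if $C_2$ or $C_3$ genuinely uses two handles; in that case the reduction is to $g=2$ rather than $g=1$, and one must verify that the extra $\beta$-direction either lies outside $\Ker(S-I_g)$ or contributes nothing to the form. Once the matrix is correctly written down, the signature computation itself is immediate. It is also worth noting that Lemma~\ref{lem:phiHs} records $\phi_g^{\mathcal{H}}(s_1) = (2g+3)/(2g+1)$, so the identity $\phi_g^V(s_1) = 1$ is consistent with (and will feed directly into, via $\phi_g^{\mathcal{H}}(s_1) - \phi_g^V(s_1) = 2/(2g+1)$) the statement of Theorem~\ref{thm:main} for the generator $\mu$ with $\mu(s_1) = 1$ in the even case — this consistency check is a good sanity test for whether the sign conventions have been handled correctly.
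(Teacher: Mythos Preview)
Your plan is correct and coincides with the paper's own proof: use the stability property to reduce to small $g$, compute $\rho(s_1)=\begin{pmatrix}P&Q\\O_g&S\end{pmatrix}$ explicitly, and then apply Theorem~\ref{thm:main1} by reading off the signature of $\langle\ ,\ \rangle_{s_1}$ on $U_{s_1}=\Ker(S-I_g)$. The paper confirms your anticipated subtlety: the curves $C_2,C_3$ genuinely involve two handles, so the reduction is to $g=2$ (not $g=1$), with $S=\begin{pmatrix}-1&1\\0&1\end{pmatrix}$ and $Q=\begin{pmatrix}2&-1\\-1&1\end{pmatrix}$, giving a one-dimensional $U_{s_1}$ on which the form is positive.
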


\begin{proof}
The proof proceeds as in the same way as the previous lemma.
In this case we may assume that $g = 2$.
Then
\[
\rho(s_1) = \begin{pmatrix} P & Q \\ O_2 & S \end{pmatrix}
\quad
\text{with}
\quad
P= \begin{pmatrix} -1 & 0 \\ 1 & 1 \end{pmatrix}, \quad
Q= \begin{pmatrix} 2 & -1 \\ -1 & 1 \end{pmatrix}, \quad
S= \begin{pmatrix} -1 & 1 \\ 0 & 1 \end{pmatrix}.
\]
The rest of computation is straightforward, so we omit it.
\end{proof}



\subsection{Proof of Theorem~\ref{thm:main}} \label{subsec:pr11}

Since both the $1$-cochains $\phi_g^{\mathcal{H}}$ and $\phi_g^V$ cobound the signature cocycle, their difference becomes a $\Q$-valued homomorphism on $\mathcal{H}(V_g) = \mathcal{H}(\Sigma_g) \cap \hb$.

We compare the homomorphism $\phi_g^{\mathcal{H}} - \phi_g^V$ with the generator $\mu \in H^1(\mathcal{H}(V_g))$ in Corollary~\ref{cor:mu}.
It is sufficient to evaluate $\phi_g^{\mathcal{H}} - \phi_g^V$ on $s_1$ if $g$ is even, and on $t_1 s_1^{\frac{g+1}{2}}$ if $g$ is odd.
By Lemmas~\ref{lem:phiHs} and \ref{lem:phiVs} we immediately obtain
\begin{equation}
\label{eq:phiHVs}
(\phi_g^{\mathcal{H}} - \phi_g^V)(s_1) = \frac{2}{2g + 1}.
\end{equation}
This settles the case where $g$ is even.
When $g$ is odd, we compute
\begin{align*}
(\phi_g^{\mathcal{H}} - \phi_g^V)(t_1 s_1^{\frac{g + 1}{2}})
&= (\phi_g^{\mathcal{H}} - \phi_g^V)(t_1)
+ \frac{g + 1}{2} (\phi_g^{\mathcal{H}} -\phi_g^V)(s_1) \\
&= \left( \frac{g + 1}{2g + 1}-1 \right) + \frac{g + 1}{2} \cdot \frac{2}{2g + 1} \\
& = \frac{1}{2g + 1}.
\end{align*}
Here, we used the fact that $\phi_g^{\mathcal{H}} - \phi_g^V$ is a homomorphism on $\mathcal{H}(V_g)$ in the first line; we used the fact that $\phi_g^{\mathcal{H}}(t_1) = (g+1)/(2g+1)$ (see the proof of Lemma \ref{lem:phiHs}), Lemma~\ref{lem:phiVt} and \eqref{eq:phiHVs} in the second line.
This completes the proof of Theorem~\ref{thm:main}. 

\vspace{1em}
\noindent {\bf Acknowledgments.} The authors would like to thank Susumu Hirose for his helpful comments.
Y.~K.~ is supported by JSPS KAKENHI 18K03308.
M.~S.~ is supported by JSPS KAKENHI 18K03310.

\end{document}